\newtheorem{thm}{Theorem}[section]
\newtheorem{lemma}[thm]{Lemma}
\newtheorem{prop}[thm]{Proposition}
\newtheorem{cor}[thm]{Corollary}
\newtheorem{question}[thm]{Question}
\newtheorem{fact}[thm]{Fact}
\theoremstyle{definition}
\newtheorem{df}[thm]{Definition}
\newtheorem{nrmks}[thm]{Remarks}
\theoremstyle{remark}
\renewcommand{\r}{\mathbb{R}}
\newcommand{\Z}{\mathbb{Z}}
\newcommand{\n}{\mathbb{N}}
\renewcommand{\to}{\rightarrow}
\def \rank{\operatorname{rank}}
\def \R { {\mathbb R} }
\def \<{\langle}
\def \>{\rangle}
\def \z {{\mathbb Z}}
\def \*Z {{{^*}\Z}}
\def \((  {(\!(}
\def \)) {)\!)}
\def \ns{\operatorname{ns}}
\def \f{\operatorname{fin}}
\numberwithin{equation}{section}
\def \st{\operatorname{st}}
\def \hyp{\operatorname{hyp}}
\def \ipc{\operatorname{IPC}}
\def \e{\operatorname{end}}
\def \ee{\operatorname{Ends}}
\def \id{\operatorname{id}}
\begin{document}

\title[Hyperfinite Fundamental Group]{The Fundamental Group of a Locally Finite Graph with Ends- A Hyperfinite Approach}
\author{Isaac Goldbring}
\thanks{Goldbring's work was partially supported by NSF grant DMS-1007144.}
\thanks{The authors would like to thank the Centre International de Recontres Math\'ematiques in Luminy, France and the organizers of the meeting \emph{Model Theory of Groups}, which took place in November 2011, where the authors first met to discuss this project.}
\address{University of California, Los Angeles, Department of Mathematics\\ 520 Portola Plaza, Box 951555\\ Los Angeles, CA 90095-1555, USA}
\email{isaac@math.ucla.edu}
\urladdr{http://www.math.ucla.edu/~isaac}
\author{Alessandro Sisto}
\address{University of Oxford-Mathematical Institute\\ 24-29 St Giles\\ Oxford OX1 3LB \\United Kingdom}
\email{sisto@maths.ox.ac.uk}
\urladdr{http://people.maths.ox.ac.uk/sisto}

\begin{abstract}
The \emph{end compactification} $|\Gamma|$ of the locally finite graph $\Gamma$ is the union of the graph and its ends, endowed with a suitable topology. We show that $\pi_1(|\Gamma|)$ embeds into a nonstandard free group with hyperfinitely many generators, i.e. an ultraproduct of finitely generated free groups, and that the embedding we construct factors through an embedding into an inverse limit of free groups.  We also show how to recover the standard description of $\pi_1(|\Gamma|)$ given in \cite{diestel}.  Finally, we give some applications of our result, including a short proof that certain loops in $|\Gamma|$ are non-nullhomologous.
%, recovering a result of Diestel and Spr\"ussel.
\end{abstract}

\maketitle

\section{Introduction}

It is a well-known fact that the fundamental group of a finite, connected graph $\Gamma$ (viewed as a one-dimensional CW-complex) is a finitely generated free group.  Indeed, fix a spanning tree $T$ of $\Gamma$ and let $e_1,\ldots,e_n$ denote the chords of $T$ (that is, edges of $\Gamma$ not in $T$) equipped with a fixed orientation.  Then to any loop $\alpha$, we can consider the word $r_\alpha$ on the alphabet $\{e_1^{\pm 1},\ldots,e_n^{\pm 1}\}$ obtained by recording the traversals of $\alpha$ on each $e_i$ keeping into account the orientation.
%, where one records $e_i$ if $\alpha$ traverses $e_i$ in the fixed orientation and $e_i^{-1}$ if $\alpha$ traverses $e_i$ in the opposite orientation (MAKE THIS SOUND BETTER!).
If we let $F_n$ denote the free group with basis $e_1,\ldots,e_n$ and we let $[r_\alpha]$ denote the unique reduced word corresponding to $r_\alpha$, then the map
$\alpha\mapsto [r_\alpha]:\pi_1(\Gamma)\to F_n$ is an isomorphism.

We now consider the case that $\Gamma$ is an infinite, locally finite, connected graph.  In order to obtain a compact space, we consider the \emph{end compactification} $|\Gamma|$ of $\Gamma$ obtained by adding the \emph{ends of $\Gamma$}.  Loosely speaking (we will make this more precise in the next section), the ends of $\Gamma$ are the ``path components of $\Gamma$ at infinity''; see the paper \cite{gold} where this heuristic is made precise using the language of nonstandard analysis.  For example, if $\Gamma$ is the Cayley graph of $\z$ with its usual generating set $\{1\}$, then $\Gamma$ has two ends, one at ``$-\infty$'' and one at ``$+\infty$.''  End compactifications of graphs have been considered, e.g., in \cite{diestel2,diestel3,DK-infcyc,Geo-cir1,Geo-cir2,BB-euler} as a way to obtain analogues for infinite graphs of results in finite graph theory that would otherwise be plainly false.  

In \cite{diestel}, the authors find a combinatorial characterization of $\pi_1(|\Gamma|)$ similar in spirit to the aforementioned construction for finite graphs.  The authors have to overcome two main obstructions, both of which are illustrated through the following simple example.  Consider the 1-way infinite sideways ladder $\Gamma$ indicated below in Figure \ref{ladder}.

\begin{figure}[h]
\label{ladder}
\includegraphics[scale=0.8]{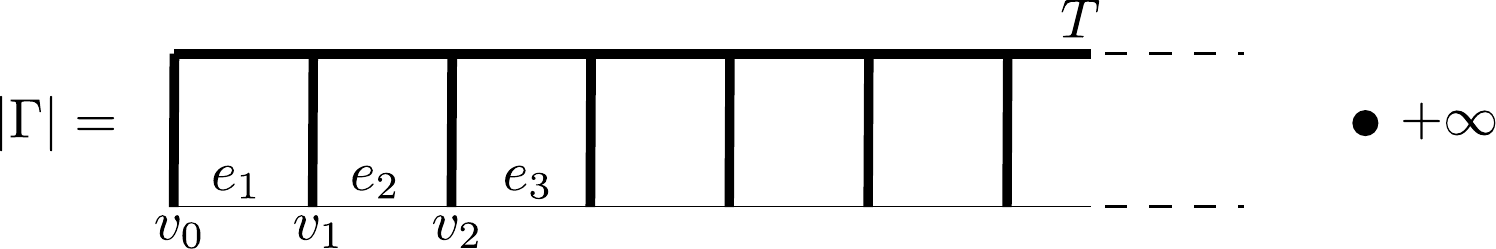}
\caption{}
\end{figure}
$\Gamma$ has one end, namely the point at ``$+\infty$.''  The loop $\alpha$ that starts at $v_0$, runs down the bottom edge of the ladder to $+\infty$ and then back to $v_0$ is clearly nullhomotopic.  However, if one looks at the traces $\alpha$ leaves on the chords of the spanning tree $T$ pictured in Figure \ref{ladder}, we are left with the following word:
$$r_\alpha=(e_1e_2\cdots) ^\frown (\cdots e_2e_1).$$  This word has order type $\omega+\omega^*$ (where $\omega$ is the order type of the natural numbers and $\omega^*$ is $\omega$ equipped with its reverse ordering) and contains no consecutive appearances of $e_i,e_i^{-1}$ for any $i$. In particular, the (usual) notion of reduction of words does not work well in this context.  More generally, for any infinite, locally finite graph $\Gamma$, the authors of \cite{diestel} consider the group $F_\infty$ consisting of reduced infinite words on the alphabet of oriented edges of a \emph{topological} spanning tree of $\Gamma$; here, an infinite word can have arbitrary countable order type, leading to a complicated (non-well-ordered) notion of reduction of words. Just taking any spanning tree of $\Gamma$ would not work, as the closure of such a tree in $|\Gamma|$ might contain loops. They then show that $\pi_1(|\Gamma|)$ embeds as a subgroup of $F_\infty$ and $F_\infty$ embeds as a subgroup of an inverse limit of finitely generated free groups by sending an infinite word to the family of its finite subwords. These are, respectively, part $(i)$ and $(ii)$ of \cite[Theorem 15]{diestel}.

After seeing the nonstandard approach to ends outlined in \cite{gold}, Diestel asked the first author whether or not the nonstandard approach could lead to a simplification of the article \cite{diestel}.  Immediately the following idea arose:  Consider the graph $\Gamma$ from Figure \ref{ladder} and fix an infinite natural number $\nu\in \n^*\setminus \n$.  (In this paper, we will assume that the reader is familiar with the basics of nonstandard analysis; otherwise they may consult  \cite{D} or \cite{He}.)  We then consider the \emph{hyperfinite} graph $\Gamma_\nu$ appearing in Figure \ref{gammanu}.

\begin{figure}[h]
\includegraphics[scale=0.8]{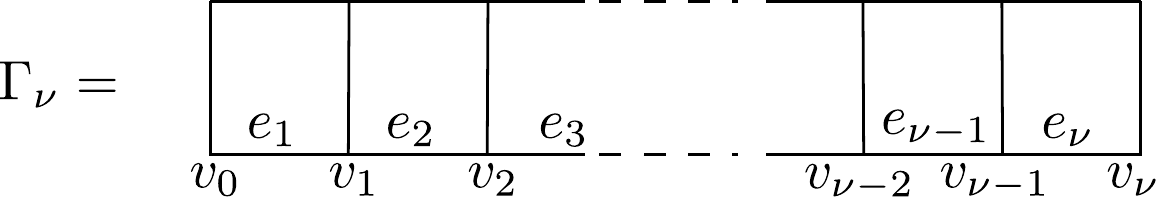}
\caption{}
\label{gammanu}
\end{figure}
The nullhomotopic loop $\alpha$ from the previous paragraph ``naturally'' induces an \emph{internal} loop in $\Gamma^*$, namely the loop that starts at $v_0$, travels down the bottom edge to $v_\nu$, then returns back to $v_0$.  The \emph{hyperfinite} word that $\alpha$ corresponds to is $e_1e_2\cdots e_\nu e_{\nu}^{-1}\cdots e_2^{-1}e_1^{-1}$, which clearly reduces to the trivial word, witnessing that $\alpha$ is nullhomotopic.

The main purpose of this article is to make precise the na\"ive approach taken in the previous paragraph.  More specifically, we prove

\begin{thm}\label{main}
Suppose $\Gamma$ is a locally finite connected graph.  Then there is a hyperfinite, internally connected graph $\Gamma_{\hyp}$ and an \emph{injective} group morphism $\Theta:\pi_1(|\Gamma|)\hookrightarrow\pi_1(\Gamma_{\hyp})$, where $\pi_1(\Gamma_{\hyp})$ is the \emph{internal fundamental group of $\Gamma_{\hyp}$}. Moreover, $\Theta$ can be constructed in such a way that it factors through an embedding of $\pi_1(|\Gamma|)$ into an inverse limit of finitely generated free groups.
\end{thm}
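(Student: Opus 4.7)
I would construct $\Gamma_{\hyp}$ as the internal closed ball of radius $\nu$ about a fixed basepoint $v_0$ in the nonstandard extension $\Gamma^*$, for some infinite $\nu \in \n^* \setminus \n$. By local finiteness of $\Gamma$ and transfer, $\Gamma_{\hyp}$ is hyperfinite and internally connected. Fixing a standard spanning tree $T$ of $\Gamma$, the intersection $T_{\hyp} := T^* \cap \Gamma_{\hyp}$ is an internal spanning tree of $\Gamma_{\hyp}$; transferring the finite construction recalled in the introduction, $\pi_1(\Gamma_{\hyp})$ is then an internally free group on the hyperfinite set of chords of $T_{\hyp}$. To build $\Theta$, fix an infinite hyperinteger $M$ and take the hyperfine partition $t_i = i/M$ of $[0,1]^*$. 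For a loop $\alpha:[0,1]\to|\Gamma|$ based at $v_0$, I would define $\tilde\alpha$ by replacing each $\alpha^*(t_i)$ with a canonical nearby vertex $w_i\in\Gamma_{\hyp}$ -- an endpoint of the edge containing $\alpha^*(t_i)$ when this is an edge interior, or, when $\alpha(t_i)$ is an end, an infinite vertex in the corresponding galaxy (using the characterization of ends from \cite{gold}) -- and connecting successive $w_i$'s by internal geodesics in $T_{\hyp}$. Routine verifications show that $\Theta([\alpha]) := [\tilde\alpha]$ is independent of the choices made, respects concatenation, and descends to homotopy classes (any homotopy $H:[0,1]^2\to|\Gamma|$ produces an internal homotopy of internal loops in $\Gamma_{\hyp}$ by the same sampling procedure).

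For the inverse-limit factorization, for each $n\in\n$ let $K_n$ denote the closed ball of radius $n$ about $v_0$ in $\Gamma$, together with the collapse of $|\Gamma|\setminus K_n$ onto $\partial K_n$; there is a natural continuous retraction $r_n:|\Gamma|\to K_n$, inducing $(r_n)_*:\pi_1(|\Gamma|)\to\pi_1(K_n)$ into a finitely generated free group. These assemble into a compatible system, giving a homomorphism $\Phi:\pi_1(|\Gamma|)\to\varprojlim\pi_1(K_n)$. The key claim is that $\Theta$ factors as $\Theta = \Psi\circ\Phi$ for a natural homomorphism $\Psi:\varprojlim\pi_1(K_n)\to\pi_1(\Gamma_{\hyp})$ coming from overspill: on the level of chord-words, the reduced internal word representing $\tilde\alpha$, restricted to chords of $T_{\hyp}$ lying inside $K_n$, coincides with $(r_n)_*([\alpha])$. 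Injectivity of $\Phi$ then follows from the standard observation that any nontrivial class in $\pi_1(|\Gamma|)$ is detected by its signed traversals of some chord lying in some $K_n$, and injectivity of $\Psi$ follows because compatible finite chord-traces recover the full hyperfinite reduced word representing $\tilde\alpha$ in $\pi_1(\Gamma_{\hyp})$.

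The main obstacle I anticipate is the careful bookkeeping underlying the factorization $\Theta = \Psi\circ\Phi$. One must verify that (i) $\tilde\alpha$ is canonically associated to $[\alpha]$ up to internal homotopy, despite the fact that the galaxy of an end contains many internal vertices and many internal paths connecting them, (ii) the chord-count of $\tilde\alpha$ on chords inside $K_n$ genuinely matches $(r_n)_*([\alpha])$, and (iii) these counts are invariant under internal nullhomotopies in $\Gamma_{\hyp}$. Point (iii) is essentially the transfer of the standard abelianized-traversal invariant to the hyperfinite setting, while (i) and (ii) require a saturation-style argument ensuring that different hyperfine samplings of $\alpha^*$ yield internally homotopic loops whose chord-counts inside each finite $K_n$ agree. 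Once this bookkeeping is in place, the theorem follows from the finite-level injectivity of $\Phi$.
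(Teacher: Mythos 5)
Your architecture inverts the paper's, and the inversion hides the entire difficulty. You propose to deduce injectivity of $\Theta$ from injectivity of the map $\Phi:\pi_1(|\Gamma|)\to\varprojlim\pi_1(K_n)$, and you justify the latter as ``the standard observation that any nontrivial class in $\pi_1(|\Gamma|)$ is detected by its signed traversals of some chord lying in some $K_n$.'' That is not a standard observation --- it is the theorem. For wild fundamental groups the natural map to the inverse limit of the finite-stage fundamental groups (the \v{C}ech/shape group) can have enormous kernel (the harmonic archipelago is the classic example), so nontriviality in $\pi_1(|\Gamma|)$ detecting itself at some finite stage is exactly what must be proved; it is the content of Lemma 14 of \cite{diestel}, which occupies 14 pages there. (Moreover ``signed traversals'' is an abelianized invariant, and your point (iii) likewise appeals to the ``abelianized-traversal invariant''; homological counts cannot detect, say, a commutator of chords, so even the shape of the claimed detection is too weak.) The paper runs the logic the other way: it first proves injectivity of $\Theta$ directly --- given a loop $\alpha$ with $\theta(\alpha)$ internally nullhomotopic, it lifts $\theta(\alpha)$ to the universal cover of $\Gamma_{\hyp}$, which is an \emph{internal tree}, projects the geodesics from the basepoint lift to the points of the lifted loop, reparameterizes them so that an edge at distance $\xi$ gets length $1/(2^{\xi}\rho(\xi))$, and takes standard parts to manufacture a genuine nullhomotopy of $\alpha$ in $|\Gamma|$ --- and only then obtains the inverse-limit factorization as an easy corollary by realizing $\Gamma^*$ as an ultrapower with $\nu=[\mathrm{id}]_\mu$, so that $\pi_1(\Gamma_{\hyp})=\prod_\mu F_n$ and $\varprojlim F_n\hookrightarrow\prod_\mu F_n$ because the ultrafilter is nonprincipal. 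Your proposal contains no substitute for this step.

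Two further problems with the construction itself. First, taking $\Gamma_{\hyp}$ to be the bare ball $B(p,\nu)$ is not adequate: the paper collapses each internally connected component of $\Gamma^*\setminus\mathring{B}(p,\nu)$ to a new vertex, and these collapse vertices are what allow a loop that runs out toward an end along one branch and returns along another branch of the same complementary component to close up with the correct homotopy class; in the bare ball the two attachment vertices may only be joinable by a path running deep into the finite part, which changes the class. The collapse also makes $\theta:|\Gamma|^*\to\Gamma_{\hyp}$ an internally continuous map (Lemma \ref{ext}), so that $\Theta$ is induced functorially and none of your ``routine verifications'' of well-definedness are needed. Second, as literally written, connecting the successive sample points $w_i$ ``by internal geodesics in $T_{\hyp}$'' produces a loop contained in the spanning tree, which is internally nullhomotopic; your $\Theta$ would be the trivial homomorphism. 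You presumably mean to traverse the chord itself when consecutive samples straddle a chord, but the sampling-and-reconnecting scheme needs to be replaced by (or shown equivalent to) the induced map of the collapse.
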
 

%of associating to each infinite, locally finite graph $\Gamma$ a hyperfinite graph $\Gamma_{\hyp}$, and to each element $[\alpha]$ of $\pi_1(|\Gamma|)$ an element $[\Theta(\alpha)]$ of $\pi_1(\Gamma_{\hyp})$ (the \emph{internal fundamental group of $\Gamma_{\hyp}$}) in such a way that the map
%$$[\alpha]\mapsto [\Theta(\alpha)]:\pi_1(|\Gamma|)\to \pi_1(\Gamma_{\hyp})$$ is injective.  
By the transfer principle, $\pi_1(\Gamma_{\hyp})$ will be an \emph{internally free group on hyperfinitely many generators}.  Said another way:  $\pi_1(|\Gamma|)$ embeds into an ultraproduct of finitely generated free groups. 
%Notice that the theorem implies that 
%Admitting an embedding into an inverse limit of free groups has stronger group-theoretical consequences than admitting an embedding in an internally free group, for example being $\omega-$residually free.
%Moreover, it is possible for this embedding to be done is such a way that it factors through an inverse limit of finitely generated free groups, thus recovering
The ``moreover'' part gives part of the aforementioned result of \cite{diestel}.  In fact, in Section \ref{recover}, we will show how our methods can be used to completely recover part $(i)$ of \cite[Theorem 15]{diestel} as described above (part $(ii)$ is purely algebraic and the nonstandard set-up has nothing to say about this portion of the theorem). Two sample consequences of the theorem are reported below.

\begin{cor}
\ 
\begin{enumerate}
 \item $\pi_1(|\Gamma|)$ has the same universal theory as a free group.
 \item $\pi_1(|\Gamma|)$ is $\omega$-residually free.
\end{enumerate}
\end{cor}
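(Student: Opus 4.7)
My plan is to obtain both statements as essentially formal consequences of Theorem~\ref{main}. From that theorem I extract two descriptions of $\pi_1(|\Gamma|)$: first, it embeds into the internally free group $\pi_1(\Gamma_{\hyp})$, which under the ultrapower description of the nonstandard universe is externally an ultraproduct $\prod_i F_{n_i}/\cU$ of finitely generated free groups (with $\{i : n_i\geq k\}\in\cU$ for every standard $k$, so almost all $F_{n_i}$ are non-abelian); second, the embedding can be arranged to factor through an embedding $\pi_1(|\Gamma|)\hookrightarrow \varprojlim_{j\in J} F_j$ into an inverse limit of finitely generated free groups over a directed set $J$.

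For item~(1), I would work with the ultraproduct. Any two non-abelian free groups mutually embed in each other (via $F_n\le F_2$ using a rank-$\omega$ free subgroup of $F_2$, and $F_2\le F_n$ trivially for $n\ge 2$), and universal sentences are preserved under both embedding and taking subgroups, so all non-abelian free groups share the same universal theory. By \L o\'s's theorem the ultraproduct $\prod_i F_{n_i}/\cU$ has that same universal theory, and since universal sentences descend to subgroups, every universal sentence true in $F_2$ is true in $\pi_1(|\Gamma|)$. For the reverse inclusion I would split into cases: either $\pi_1(|\Gamma|)$ is itself free (if $\Gamma$ is a tree or a single cycle, giving the trivial group or $\Z$), or $\Gamma$ contains a finite subgraph with two independent cycles, in which case $F_2$ embeds into $\pi_1(\Gamma)\subseteq \pi_1(|\Gamma|)$ and universal sentences descend further. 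In either case $\pi_1(|\Gamma|)$ has the same universal theory as some free group.

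For item~(2), I would use the inverse limit. Given $g_1,\ldots,g_k\in\pi_1(|\Gamma|)\setminus\{1\}$, each has non-trivial image in $\varprojlim_{j\in J}F_j$, so for every $\ell$ there is some $j_\ell\in J$ for which the projection of $g_\ell$ to $F_{j_\ell}$ is non-trivial. Directedness of $J$ lets me pick a single $j\in J$ above all the $j_\ell$; by the compatibility of the inverse system, the projection of each $g_\ell$ to $F_j$ maps onto its non-trivial image in $F_{j_\ell}$ and so is itself non-trivial. The composition $\pi_1(|\Gamma|)\hookrightarrow \varprojlim F_j\twoheadrightarrow F_j$ is then the desired homomorphism to a finitely generated free group separating $\{g_1,\ldots,g_k\}$ from the identity, which is exactly the statement that $\pi_1(|\Gamma|)$ is $\omega$-residually free.

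The only substantial input is Theorem~\ref{main} itself; once that is in hand, the rest is routine, and the only place that requires care is the reverse inclusion of item~(1), where the case split on whether $\pi_1(|\Gamma|)$ contains $F_2$ (or is itself free) cannot really be avoided.
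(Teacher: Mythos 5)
Your argument is correct and is precisely the intended derivation: the paper states this corollary without proof as an immediate consequence of Theorem \ref{main}, namely the embedding into the ultraproduct $\prod_\mu F_n$ (for the universal theory, via \L o\'s's theorem and the fact that universal sentences pass to subgroups, where your case split on whether $\pi_1(|\Gamma|)$ is itself free or contains $F_2$ is genuinely necessary) together with the factorization through $\varprojlim F_n$ (for $\omega$-residual freeness). The only cosmetic point is that the paper's definition of $\omega$-residually free asks for a \emph{surjective} homomorphism onto a free group, so you should corestrict your map $\pi_1(|\Gamma|)\to F_j$ to its image and invoke Nielsen--Schreier to see that this image is itself free.
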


Recall that a group $G$ is \emph{$\omega$-residually free} if for any $g_1,\ldots,g_n\in G\setminus\{1\}$, there is a surjective homomorphism $\sigma:G\to F$ onto a free group such that $\sigma(g_i)\not=1$ for $i=1,\ldots,n$.  For finitely generated groups, conditions (1) and (2) in the above corollary are well-known to be equivalent, see e.g. \cite{CG}. Finitely generated groups satisfying conditions (1) and (2) are called \emph{limit groups} and are widely studied.  

We should mention that our construction is simpler than that of \cite{diestel} in that we do not need to concern ourselves with topological spanning trees (whose existence is nontrivial).  On the other hand, our construction uses the \emph{universal cover} of $\Gamma_{\hyp}$, which is an internal tree where homotopies can be reparameterized in nice ways.  Only with the hyperfinite approach is the recourse to the universal covering tree possible.
Some applications of this result (and the construction of $\Theta$) will be given in Section \ref{appl}, where we give another proof of the fact that any inclusion of locally finite connected graphs $\Gamma^1\to \Gamma^2$ induces an injection $\pi_1(|\Gamma^1|)\to \pi_1(|\Gamma^2|)$, the main reason being that the analogous result is true for finite graphs.

The original motivation for the work in \cite{diestel} was to show that certain loops in $|\Gamma|$ are non-nullhomologous. The motivation for this is that these loops are trivial in the so-called topological cycle space, another homology theory that has proven to be more suitable than the usual one to study Fredudenthal compactifications (of spaces even more general than graphs) \cite{DiSp-hom,DS-loccpthom}. The existence of such loops implies that these theories are actually different. Using our techniques and the algorithm for calculating commutator lengths in finitely generated free groups, in Section \ref{appl} we give a very short proof that one of the loops in the infinite-sideways ladder above witnessing the difference between the said homology theories is in fact non-nullhomologous. We also make the remark that a loop $\alpha$ gives rise to the trivial element of the topological cycle space if and only if $\Theta(\alpha)$, for $\Theta$ as in the main theorem, is internally null-homologous in $H_1(\Gamma_{\hyp})$ (with coefficients in $\z/2\z$ or $\z$ depending on the version of topological cycle space under consideration).

We want to caution that our result does not imply that $\pi_1(|\Gamma|)$ is a free group.  (According to Corollary 18 of \cite{diestel}, $\pi_1(|\Gamma|)$ is free if and only if every end of $\Gamma$ is contractible in $|\Gamma|$.)  In general, \emph{an internally free group is not actually free}.  For example, $\z^*$ is internally free on one generator.  Fix $M,N\in \n^*\setminus \n$ with $\frac{M}{N}$ infinitesimal and consider the injective group morphism $(a,b)\mapsto aM+bN:\z^2\to \z^*$.  Since $\z^2$ is not free, we see that $\z^*$ is not free.
%Nevertheless, the fact that $\pi_1(|\Gamma|)$ embeds in $\pi_1(\Gamma_{\hyp})$ will be used in the last section to prove that any inclusion of locally finite graphs $\Gamma^1\to \Gamma^2$ induces an injection $\pi_1(|\Gamma^1)|\to \pi_1(|\Gamma^2|)$, the main reason being that the analogous result is true for finite graphs.

%Throughout this paper, we maintain the following

%\begin{conv}
%If $g:A\to B$ is a function, we denote its nonstandard extension by $g:A^*\to B^*$ rather than $g^*:A^*\to B^*$.  Instead, we reserve the notation $g^*$ for the situation when $g:X\to Y$ is a function between topological spaces inducing the function $g^*:\pi_1(X)\to \pi_1(Y)$ between the fundamental groups.
%\end{conv}

We would like to thank Reinhard Diestel for suggesting this project to us and for being so supportive of the nonstandard approach.

\section{Preliminaries on End Compactifications}

In this section, we fix a proper, pointed geodesic metric space $(X,d,p)$.  For $n\in \n$, we let $B(p,n)$ (resp. $\mathring{B}(p,n)$) denote the closed (resp. open) ball of radius $n$ around $p$.  For $x,y\in X$, we write $x\propto_n y$ to mean that there is a path in $X$ connecting $x$ to $y$ which remains entirely in $X\setminus B(p,n)$.  Given proper rays $r_1,r_2:[0,\infty)\to X$ based at $p$, we say that $r_1$ and $r_2$ \emph{determine the same end of $X$} if for every $n\in \n$, there is $t_0\in [0,\infty)$ such that for all $t\geq t_0$, $r_1(t)\propto_n r_2(t)$.  The notion of determining the same end is an equivalence relation on the set of proper rays beginning at $p$ and we let $\e(r)$ denote the equivalence class of the proper ray $r$.

The \emph{end compactification of $X$}, denoted $|X|$, is the set $X$ together with its ends.  There is a natural topology on $|X|$, where a neighborhood basis of $x\in X$ is a neighborhood basis of $x$ induced by the metric and a neighborhood basis for $\e(r)$ is given by sets of the form
$$W_n(\e(r))=\{\e(r')\ |\ \exists m_0\in\n\ \forall m\geq m_0 \ r(m)\propto_n r'(m)\}\cup$$
$$ \{x\in X\ | \ \exists m_0\in\n\ \forall m\geq m_0\ r(m)\propto_n x\}.$$

In \cite{gold}, a nonstandard characterization of the topology of the space of ends of $X$ was outlined.  In this section, we adapt the arguments from \cite{gold} to show that $|X|$ is the same as the space $|X|_{\ns}$ (the ``nonstandard ends compactification'') defined below. First, set $$X_{\f}:=\{x\in X^* \ | \ d(x,p) \text{ is finite}\} \quad \text{ and } \quad X_{\inf}:=X^*\setminus X_{\f}.$$  For $x,y\in X^*$, write $x\propto y$ (resp. $x\propto_n y$ for some $n\in\n$) if there exists an internal path connecting $x$ to $y$ in $X^*\backslash X_{\f}$ (resp. in $X^*\backslash B(p,n)$).  For $x\in X_{\inf}$, let $[x]$ denote the equivalence class of $x$ with respect to $\propto$. Set $\ipc(X):=\{[x] \ | \ x\in X_{\inf}\}$ and $|X|_{\ns}:=X\cup \ipc(X)$.  We consider the topology on $|X|_{\ns}$ where a neighborhood basis for $[x]\in \ipc(X)$ is given by
$$V_n([x]):=\{[x']\in \ipc(X)\ | \ x'\propto_n x\}\cup \{x'\in X\ | \ x'\propto_n x\},$$
and a neighborhood basis for $x\in X$ is a neighborhood basis for the topology induced by the metric. \

In \cite{gold}, it is proven that $\ipc(X)$ is homeomorphic to the space $\ee(X)$ of the ends of $X$ via the homeomorphism $\eta:\ee(X)\to \ipc(X)$ given by $\eta(\e(r))=[r(\sigma)]$, where $\sigma\in \r_{\inf}^{>0}$ is fixed. Extend $\eta$ to a map $\overline{\eta}:|X|\to |X|_{ns}$ by requiring $\overline{\eta}$ to be the identity map on $X$.

\begin{prop}
 $\overline{\eta}:|X|\to |X|_{\ns}$ is a homeomorphism.
\end{prop}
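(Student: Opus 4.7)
The plan is to use the fact from \cite{gold} that $\eta \colon \ee(X) \to \ipc(X)$ is already a homeomorphism, and bootstrap this to the end compactifications. Bijectivity of $\overline{\eta}$ is immediate: it is the identity on $X$ and agrees with the bijection $\eta$ on the infinite parts. Since at any standard $x \in X$ both spaces use the same metric neighborhood basis, continuity and openness at such points are automatic. All of the real work lies in comparing the basic neighborhoods at an end $\e(r)$ with the basic neighborhoods at the corresponding equivalence class $[r(\sigma)] = \overline{\eta}(\e(r))$; concretely, I would prove the equality
$$\overline{\eta}\bigl(W_n(\e(r))\bigr) \;=\; V_n\bigl([r(\sigma)]\bigr) \qquad \text{for every } n \in \n.$$

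The inclusion $\subseteq$ is essentially a one-line application of transfer. The defining condition ``$\exists m_0 \in \n\ \forall m \geq m_0\ r(m) \propto_n y$'' in $W_n(\e(r))$ (with $y$ either a standard point $x \in X$ or the value $r'(m)$) is an internal statement about standard parameters, so it transfers to all $\tau \in \n^*$ with $\tau \geq m_0$; taking $\tau = \sigma$ gives $r(\sigma) \propto_n x$ (resp.\ $r(\sigma) \propto_n r'(\sigma)$). Since $\eta(\e(r')) = [r'(\sigma)]$, this places the image in $V_n([r(\sigma)])$.

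For the reverse inclusion $\supseteq$, two ingredients beyond transfer come in. First, because $r$ is a proper ray, there is a standard $M = M(n, r)$ such that $r([M, \infty))$ avoids $B(p, n)$; the standard statement ``for all $m, m' \geq M$, $r(m) \propto_n r(m')$'' transfers to give $r(m) \propto_n r(\tau)$ for every standard $m \geq M$ and every $\tau \in \n^*$ with $\tau \geq M$. Second, if $y_1, y_2 \in X^*$ satisfy $y_1 \propto y_2$, then automatically $y_1 \propto_n y_2$ for every standard $n$, since any internal path avoiding $X_\f$ avoids each standard ball $B(p, n)$. Given these, a standard $x \in V_n([r(\sigma)])$ satisfies $x \propto_n r(\sigma)$, and transitivity of $\propto_n$ combined with $r(m) \propto_n r(\sigma)$ yields $r(m) \propto_n x$ for all standard $m \geq M$, so $x \in W_n(\e(r))$. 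For a class $[x'] \in V_n([r(\sigma)])$, surjectivity of $\eta$ provides a standard proper ray $r'$ with $[r'(\sigma)] = [x']$, and the second observation upgrades the defining relation $r'(\sigma) \propto x'$ to $r'(\sigma) \propto_n x'$; chaining $r(m) \propto_n r(\sigma) \propto_n x' \propto_n r'(\sigma) \propto_n r'(m)$ for $m \geq \max(M(n, r), M(n, r'))$ puts $\e(r')$ in $W_n(\e(r))$.

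The step I expect to be the main obstacle is this reverse inclusion, specifically arranging the transitivity chain correctly: one has to pick a \emph{standard} ray $r'$ representing $[x']$ in order to invoke properness on the $r'$-side, and one has to notice that the strong relation $\propto$ automatically upgrades to all the finer relations $\propto_n$ for standard $n$. Without these two observations, a nonstandard neighborhood $V_n$ would not control behavior at the fixed standard level $n$ on the classical side. Once they are isolated, the equality of basic neighborhoods, and hence the homeomorphism property of $\overline{\eta}$, follows cleanly.
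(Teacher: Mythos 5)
Your proof is correct, and its core move is the same as the paper's: evaluate the rays at the fixed infinite $\sigma$ and match the basic neighborhoods $W_n(\e(r))$ with $V_n([r(\sigma)])$, with the easy inclusion handled by transfer. But your execution differs in two ways worth noting. First, for the reverse inclusion the paper argues by contradiction via overflow (if $\e(r')\notin W_n(\e(r))$, overflow produces an infinite $\sigma'$ with $r(\sigma')\not\propto_n r'(\sigma')$, contradicting $r(\sigma)\propto_n r(\sigma')$ and $r'(\sigma)\propto_n r'(\sigma')$), whereas you argue directly from properness: choosing standard $M(n,r)$, $M(n,r')$ so that the tails avoid $B(p,n)$ and chaining $r(m)\propto_n r(\sigma)\propto_n x'\propto_n r'(\sigma)\propto_n r'(m)$. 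Both rest on the same properness fact; yours avoids overflow entirely. Second, the paper only proves continuity and then invokes compactness of $|X|$ (and, implicitly, Hausdorffness of $|X|_{\ns}$) to upgrade a continuous bijection to a homeomorphism; because you establish the exact two-sided correspondence of neighborhood bases, you get openness for free and need neither compactness nor Hausdorffness. Your isolation of the ``upgrade'' $y_1\propto y_2\Rightarrow y_1\propto_n y_2$ for standard $n$ is also doing real work that the paper buries in the phrase ``immediate from the definitions.'' One small point you elide: after concatenating internal paths you obtain $r(m)\propto_n x$ only in the internal sense, and you should invoke downward transfer (both endpoints being standard) to recover a standard path in $X\setminus B(p,n)$ as required by the definition of $W_n$; this is routine but should be said.
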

 
\begin{proof}
% In view of the fact that $\eta$ is bijective, $\overline{\eta}$ is clearly bijective. Also, $\overline{\eta}$ and its inverse are continuous at any point of $X$. 
We begin with the following

\noindent \textbf{Claim:}  $W_n(\e(r))=W'_n(\e(r))$, where
$$W'_n(\e(r))=\{\e(r')\ |\ r(\sigma)\propto_n r'(\sigma)\}\cup \{x\in X\ | \ x\propto_n r(\sigma)\}.$$

\noindent \textbf{Proof of Claim:}  The containment $\subseteq$ follows by transfer as $\sigma\geq m_0$.  For the other containment, suppose that $\e(r')\in W_n'(\e(r))$, so $r(\sigma)\propto_n r'(\sigma)$.  If $\e(r')\notin W_n(\e(r))$, then, by overflow, there is $\sigma'>\n$ such that $r(\sigma')\not\propto_n r'(\sigma')$.  Since $r(\sigma)\propto_n r(\sigma')$ and $r'(\sigma)\propto_n r'(\sigma')$, we get $r(\sigma)\not\propto r'(\sigma)$, a contradiction.  The other case is similar.
%The other containment follows from the fact that if, given a proper ray $r'$ so that $[r']\in W_n([r])$, we can choose $m_0$ so that for all $t\in\r$ with $t\geq m_0$ we have that $r(t), r'(t)$ are not contained in $B(n,p)$. In particular, there exists an internal path in $X^*$ connecting $r(m)$ to $r'(m)$ for each $m\geq m_0$: just concatenate a subpath of $r$, an internal path connecting $r(\sigma)$ to $r'(\sigma)$ not intersecting $B(n,p)$ and a subpath of $r'$. As we have such an internal path in $X^*$, there must also be a path in $X$ from $r(m)$ to $r'(m)$ not intersecting $B(n,p)$.

\

\noindent Since $|X|$ is compact and $\overline{\eta}$ is bijective, it suffices to show that $\overline{\eta}$ is continuous.  $\overline{\eta}$ is clearly continuous on $X$, so let us consider $\e(r)\in |X|$ and $n>0$.  The proof will be finished if we can show that $\overline{\eta}(W^{'}_n(\e(r)))
=V_n([r(\sigma)])$.  However, this is immediate from the definitions and the fact that $\eta$ is a homeomorphism.
%As $W_n([r])=W'_n([r])$, it is quite clear that $W_n([r])$ is mapped by $\overline{\eta}$ to $V_n([r(\sigma)])$ and that the preimage of $V_n([r(\sigma)])$ is contained in $W_n([r])$. This shows that $\overline{\eta}$ and its inverse are continuous, concluding the proof. 
\end{proof}

\begin{lemma}
If $|X^*|$ denotes the internal ends compactification of $X^*$, then $|X|^*=|X^*|$. 
\end{lemma}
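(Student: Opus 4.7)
The plan is to deduce this from the transfer principle, observing that the end compactification is a definable operation on proper pointed geodesic metric spaces. Writing $(Y,d,q)\mapsto |Y|$ as a composition of definable steps—form the set of proper continuous rays $r:[0,\infty)\to Y$ based at $q$; quotient by the equivalence relation ``$r_1$ and $r_2$ determine the same end''; adjoin the quotient to $Y$; topologize by the metric balls on $Y$ together with the neighborhoods $W_n$ around each end—one sees that every ingredient is given by a first-order formula in the ambient superstructure (with $Y$ as a parameter). Hence the operation commutes with the $(\cdot)^*$ functor by transfer, which is exactly the content of the lemma.

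Concretely, I would first unpack the right-hand side: since $(X,d,p)$ is a proper pointed geodesic metric space, transfer gives that $(X^*,d^*,p)$ is an internally proper, internally pointed, internally geodesic metric space, so the construction of Section 2 can be read internally to produce $|X^*|$. Its underlying internal set is $X^*\cup E$, where $E$ is the set of internal equivalence classes of internal proper rays $r:[0,\infty)^*\to X^*$ based at $p$ under the internal end-equivalence relation, and its internal topology has the basic neighborhoods $W_n(\e(r))$ interpreted internally. Transferring the formula that defines $|Y|$ in terms of $Y$ shows that $|X|^*$ is exactly this same internal set $X^*\cup E$, and transferring the formula that defines $W_n(\e(r))$ shows that the two internal topologies have the same basic neighborhoods. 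The identification $|X|^*=|X^*|$ follows.

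The only point to watch is the verification that each ingredient—``proper continuous ray'', ``path avoiding $B(q,n)$'', ``two rays determine the same end''—really is expressible as a bounded formula over the superstructure, rather than something that escapes the reach of transfer. This is routine set-theoretic bookkeeping, requiring no analytic content beyond the definitions already set down in Section 2, so the ``hard part'' is essentially just the careful unpacking of what transfer says in this situation; there is no substantive mathematical obstacle.
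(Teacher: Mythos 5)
Your proposal is correct and is essentially the paper's own argument: the paper's one-sentence proof simply observes that both spaces are $X^*$ together with the quotient of the internally proper rays from $p$ by the internal end-equivalence relation, which is exactly the transfer/definability observation you spell out in more detail.
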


\begin{proof}
Both spaces are the union of $X^*$ and the quotient of the internally proper rays in $X^*$ emanating from $p$ modulo the relation of determining the same internal end. 
\end{proof}

\section{Embedding $\pi_1(\Gamma)$ into $\pi_1(\Gamma_{hyp})$}
\label{emb}

\subsection{The Main Theorem}

In this section, we fix an infinite, locally finite, connected graph $\Gamma$ with end compactification $|\Gamma|$.  We also fix a basepoint $p\in \Gamma$.

\begin{lemma}\label{ext}
Suppose that $n\in \n$ and $\theta:\Gamma \to \Gamma_n$ is the map which collapses each connected component of $\Gamma\setminus \mathring{B}(p,n)$ to a point.  Then $\theta$ extends continuously to a map $\theta:|\Gamma|\to \Gamma_n$.
\end{lemma}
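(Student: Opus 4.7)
The plan is to define $\theta$ on each end by sending it to the collapsed point corresponding to the (unique) unbounded component of $\Gamma\setminus\mathring{B}(p,n)$ in which the end ``lives'', and then to verify continuity separately at points of $\Gamma$ (which is automatic) and at ends (which uses the basic neighborhoods $W_m(\e(r))$ with $m\geq n$).

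First, I would observe that since $\Gamma$ is locally finite and connected, $B(p,n)$ meets only finitely many edges, so $\Gamma\setminus\mathring{B}(p,n)$ has only finitely many connected components; hence $\Gamma_n$ is a finite graph. For any proper ray $r$ based at $p$, properness forces $r([T,\infty))\subseteq \Gamma\setminus\mathring{B}(p,n)$ for some $T$, and connectedness of this tail puts it inside a single component $C(r)$. If $\e(r)=\e(r')$, then for large $t$ there is a path joining $r(t)$ to $r'(t)$ in $\Gamma\setminus B(p,n)\subseteq\Gamma\setminus\mathring{B}(p,n)$, so $C(r)=C(r')$. This lets me set $\theta(\e(r)):=[C(r)]$, where $[C(r)]$ denotes the point to which the component $C(r)$ is collapsed. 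On $\Gamma$ itself the map remains the original quotient map.

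Next, for continuity: at a point of $\Gamma$ the topology of $|\Gamma|$ agrees with the metric topology, and the quotient map $\theta:\Gamma\to\Gamma_n$ is continuous by construction, so there is nothing to check. At an end $\e(r)$, the key claim is that for every $m\geq n$ one has $\theta(W_m(\e(r)))=\{[C(r)]\}$. Indeed, if $\e(r')\in W_m(\e(r))$, then $r(t)\propto_m r'(t)$ for large $t$, so $r'(t)$ lies in the same component of $\Gamma\setminus B(p,m)$ as $r(t)$, a component that is contained in $C(r)$; thus $\theta(\e(r'))=[C(r)]$. Similarly, if $x\in\Gamma\cap W_m(\e(r))$, then $x$ is joined to some $r(t)$ by a path avoiding $B(p,m)\supseteq\mathring{B}(p,n)$, so $x\in C(r)$ and $\theta(x)=[C(r)]$. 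Since every open neighborhood of $[C(r)]$ contains the singleton $\{[C(r)]\}$, continuity at $\e(r)$ follows at once.

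The main obstacle is not really conceptual but bookkeeping: one has to be careful about what happens to the portions of edges straddling the sphere of radius $n$ so that the collapsing map $\Gamma\to\Gamma_n$ is genuinely continuous (and $\Gamma_n$ is a genuine graph with the collapsed points as additional vertices). Once that set-up is in place, the extension to $|\Gamma|$ is essentially forced, because the quotient crushes all the ``spread'' of an unbounded component to a single point, and the sets $W_m(\e(r))$ with $m\geq n$ only see that component, giving an automatic match between end-neighborhoods in $|\Gamma|$ and point-neighborhoods of $[C(r)]$ in $\Gamma_n$.
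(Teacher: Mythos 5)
Your proposal is correct and follows essentially the same route as the paper: both define $\theta(\e(r))$ to be the collapsed point of the component of $\Gamma\setminus\mathring{B}(p,n)$ determined by (the tail of) $r$, and both verify continuity at an end by showing that an entire basic neighborhood $W_m(\e(r))$ (the paper uses $m=n$) is sent to that single point. Your write-up just spells out the well-definedness and the component bookkeeping that the paper leaves as ``easy to check.''
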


\begin{proof}
Given $\e(r)\in |\Gamma|$ and $x\in \Gamma\cap W_n(\e(r))$, set $\theta(\e(r)):=\theta(x)$.  It is then easy to check that $\theta(W_n(r))=\{\theta(x)\}$, whence $\theta$ is continuous at $\e(r)$.
\end{proof}

In the rest of this section, we fix $\nu\in \n^*\setminus \n$ and set $\theta:\Gamma^*\to \Gamma_{\hyp}$ to be the map which collapses each internally connected component of $\Gamma^*\setminus \mathring{B}(p,\nu)$ to a point.  We consider $\Gamma$ both as a subset of $|\Gamma|^*$ and $\Gamma_{\hyp}$ in the obvious way and let $\hat{\Gamma}$ denote both the elements of $|\Gamma|^*$ and $\Gamma_{\hyp}$ an infinitesimal distance away from an element of $\Gamma$.  (This double use of $\hat{\Gamma}$ should not cause any confusion.) By the previous lemma, we obtain an extension of $\theta$ to an internally continuous map $\theta:|\Gamma^*|=|\Gamma|^*\to \Gamma_{\hyp}$.  Consequently, we get an induced map on the internal fundamental groups:

\begin{df}
We set $\Theta:\pi_1(|\Gamma|^*)\to \pi_1(\Gamma_{\hyp})$ to be the group homomorphism induced by the map $\theta:|\Gamma|^*\to \Gamma_{\hyp}$.  Here, $\pi_1(|\Gamma|^*)$ and $\pi_1(\Gamma_{\hyp})$ are the internal fundamental groups of $|\Gamma|^*$ and $\Gamma_{\hyp}$ respectively.
\end{df}

Observe that $\pi_1(|\Gamma|^*)=(\pi_1(|\Gamma|))^*$, so $\pi_1(|\Gamma|)$ is a subgroup of $\pi_1(|\Gamma|^*)$.  Our main goal is to show that $\Theta|_{\pi_1(|\Gamma|)}$ is injective, proving Theorem \ref{main}.  First, we show that the we can construct $|\Gamma|$ in a nonstandard fashion analogous to the construction of Section 2 using $\Gamma_{\hyp}$ instead of $\Gamma_{\inf}$. Indeed, we will later need a slight generalization of this: Consider an internally connected hyperfinite graph $\Lambda$ and internally continuous maps $\varphi:|\Gamma|^*\to \Lambda$, $\psi:\Lambda\to \Gamma_{\hyp}$, satisfying:
\begin{itemize}
 \item $\psi\circ\varphi=\theta$,
 \item $\varphi(\hat{\Gamma})=\psi^{-1}(\hat{\Gamma})$.
\end{itemize}

We remark that, for the purposes of this section, we will take $\Lambda=\Gamma_{\hyp}$, $\varphi=\theta$, and $\psi=\id_{\Gamma_{\hyp}}$. In what follows, we will also regard $\Gamma$ as a subset of $\Lambda$ and once again let $\hat{\Gamma}$ denote the elements of $\Lambda$ an infinitesimal distance away from an element of $\Gamma$.  We let $d$ denote the usual metric on $\Gamma$, namely the path metric on $\Gamma$ when each edge is identified with the interval $[0,1]$.  Similarly, we let $d_h$ denote the internal metric on $\Lambda$ and $\Gamma_{\hyp}$ obtained by identifying each edge with $[0,1]^*$.

For $x,y\in \Lambda$, write $x\propto y$ (resp. $x\propto_n y$ for some $n\in\n$) if there exists an internal path connecting $x$ to $y$ in $\Lambda\backslash \hat{\Gamma}$ (resp. in $\Lambda\backslash B(n,p)$), and denote by $[x]$ the equivalence class of $x$ with respect to $\propto$. Also, let $\ipc_{\hyp}(\Gamma)$ denote the collection of such equivalence classes. Finally, consider the topology on $|\Gamma|_{\hyp}:=\Gamma\cup \ipc_{\hyp}(\Gamma)$ where a neighborhood basis for $[x]\in \ipc_{\hyp}(\Gamma)$ is given by
$$V^{\hyp}_n([x]):=\{[x']\in \ipc_{\hyp}(\Gamma)\ | \ x'\propto_n x\}\cup \{x'\in\Gamma\ | \ x'\propto_n x\},$$
and a neighborhood basis for $x\in\Gamma$ is a neighborhood basis for the topology induced by the metric.
\par
Suppose $x,y\in \Gamma_{\inf}$ are such that there is an internal path connecting $x$ to $y$ in $\Gamma^*\backslash \hat{\Gamma}$.  Then, composing with $\varphi$, we get an internal path connecting $\varphi(x)$ to $\varphi(y)$ in $\Lambda\backslash \hat{\Gamma}$.  This allows us to define a map $\chi:|\Gamma|_{\ns}\to |\Gamma|_{\hyp}$ which restricts to the identity on $\Gamma$ and satisfies $\chi([x])=[\varphi(x)]$ for each $x\in \Gamma_{\inf}$.
\begin{lemma}\label{chi}
 $\chi$ is a homeomorphism.
\end{lemma}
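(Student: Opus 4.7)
The plan is to show that $\chi$ is a well-defined continuous bijection; combined with compactness of $|\Gamma|_\ns$ (which is homeomorphic to $|\Gamma|$ by the previous proposition) and Hausdorffness of $|\Gamma|_\hyp$, this immediately upgrades $\chi$ to a homeomorphism via the standard compact-to-Hausdorff argument.

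For well-definedness and continuity, the key point is that the two standing hypotheses force $\varphi$ to carry $\Gamma^* \setminus \hat{\Gamma}$ into $\Lambda \setminus \hat{\Gamma}$: if $\varphi(z)$ were in $\hat{\Gamma}^{\Lambda}$, the relation $\psi \circ \varphi = \theta$ together with $\varphi(\hat{\Gamma}) = \psi^{-1}(\hat{\Gamma})$ would give $\theta(z) \in \hat{\Gamma}^{\Gamma_{\hyp}}$, and the explicit form of $\theta$ (identity on $B(p,\nu)^*$, collapsing each component of $\Gamma^* \setminus \mathring{B}(p,\nu)$ to a single far point) then forces $z \in \hat{\Gamma}^{\Gamma^*}$. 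With this in hand, any internal path in $\Gamma^* \setminus B(p,n)$ witnessing $x \propto_n y$ pushes forward under $\varphi$ to an internal path in $\Lambda \setminus B(p,n)$ witnessing $\varphi(x) \propto_n \varphi(y)$ (the threshold $n$ is preserved because $\varphi$ restricts to the identity on $\Gamma \subseteq \Lambda$). This simultaneously yields well-definedness of $\chi$ and the inclusion $\chi(V_n([x])) \subseteq V_n^{\hyp}([\varphi(x)])$; continuity on $\Gamma$ is trivial.

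Bijectivity splits into two parts. For surjectivity, given $x' \in \Lambda \setminus \hat{\Gamma}$ the hypothesis gives $\psi(x') \in \Gamma_{\hyp} \setminus \hat{\Gamma}$; pick $x \in \Gamma^* \setminus \hat{\Gamma}$ with $\theta(x) = \psi(x')$, which is possible because $\theta$ is internally surjective and its fibers either are singletons in $B(p,\nu)^*$ at infinite distance from $p$, or are full components of $\Gamma^* \setminus \mathring{B}(p,\nu)$, which lie entirely outside $\hat{\Gamma}$. In the principal case $\Lambda = \Gamma_{\hyp}$, $\varphi = \theta$, $\psi = \id$ one has $\varphi(x) = \theta(x) = x'$ outright; in general an internal path inside the fiber $\psi^{-1}(\psi(x'))$ connects $\varphi(x)$ to $x'$ within $\Lambda \setminus \hat{\Gamma}$, so $\chi([x]) = [x']$. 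For injectivity, suppose $\varphi(x) \propto \varphi(y)$ in $\Lambda \setminus \hat{\Gamma}$ via an internal path $\gamma$; pushing down by $\psi$ gives an internal path in $\Gamma_{\hyp} \setminus \hat{\Gamma}$ from $\theta(x)$ to $\theta(y)$, and this is lifted through $\theta$ back to $\Gamma^*$ by replacing each passage through a collapse point $c_C$ by an internal detour through the component $C \subseteq \Gamma^* \setminus \mathring{B}(p,\nu) \subseteq \Gamma^* \setminus \hat{\Gamma}$; the concatenation is an internal path from $x$ to $y$ in $\Gamma^* \setminus \hat{\Gamma}$, so $[x] = [y]$.

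Finally, Hausdorffness of $|\Gamma|_\hyp$ comes from a short overflow argument: distinct $[x'] \neq [y']$ satisfy $x' \not\propto y'$, and so $x' \propto_n y'$ must fail for some standard $n$ (otherwise overflow produces an infinite $N$ with a path avoiding $B(p,N) \supseteq \hat{\Gamma}$, contradicting $x' \not\propto y'$); for such $n$ the neighborhoods $V_n^{\hyp}([x'])$ and $V_n^{\hyp}([y'])$ are disjoint by concatenation of paths in $\Lambda \setminus B(p,n)$. The main obstacle, both conceptually and technically, is the path-lifting step in the injectivity argument: it depends crucially on the internal connectedness of the collapse fibers of $\theta$ and on their containment in $\Gamma^* \setminus \hat{\Gamma}$, which is precisely why the parameter $\nu$ was chosen infinite and why the hyperfinite picture makes this approach work.
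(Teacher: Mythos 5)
Your proof is correct and follows essentially the same route as the paper: continuity by pushing internal paths forward through $\varphi$, the compact-to-Hausdorff reduction to bijectivity, and injectivity by pushing a path down to $\Gamma_{\hyp}$ via $\psi$ and lifting it back to $\Gamma^*$ by detouring through the internally connected collapsed components, which lie outside $\hat{\Gamma}$ precisely because $\nu$ is infinite. The only differences are that you spell out the Hausdorffness of $|\Gamma|_{\hyp}$ and the surjectivity step, both of which the paper leaves implicit.
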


\begin{proof}
Arguing as before the lemma, $V_n([x])$ is mapped into $V^{\hyp}_n([\varphi(x)])$ for each $x\in \Gamma_{\inf}$, whence $\chi$ is continuous.  Since $|\Gamma|_{\ns}$ is compact, it suffices to prove that $\chi$ is bijective.  Since $\chi$ is clearly surjective, it remains to prove that $\chi$ is injective.  Towards this end, suppose that $\varphi(x)$ can be connected to $\varphi(y)$ by an internal path in $\Lambda\backslash \hat{\Gamma}$. Then the same holds for $\theta(x),\theta(y)$ with $\Gamma_{\hyp}$ instead of $\Lambda$. Call $\gamma$ the internal path in $\Gamma_{\hyp}\backslash \hat{\Gamma}$ connecting them. Consider pairs of values $t^j_0,t^j_1$ so that $d_h(\gamma(t^j_i),p)=\nu$, $\gamma(t^j_0)$ is in the same connected component of $\Gamma^*\backslash \mathring{B}(p,\nu)$ as $\gamma(t^j_1)$, and the interval $[t^j_0,t^j_1]$ is maximal among intervals with endpoints satisfying these properties. We can substitute $\gamma|_{[t^j_0,t^j_1]}$ with an internal path in $\Gamma^*\backslash \mathring{B}(p,\nu)$ connecting $\gamma(t^j_0)$ to $\gamma(t^j_1)$ (if $t^j_0>0$ and $t^j_1<1$, otherwise use $x$ and/or $y$), and so we obtain an internal path from $x$ to $y$ in $\Gamma^*\backslash \hat{\Gamma}$.
%Again for a very similar reason, $V_n([x])$ is bijectively mapped to $V^{\hyp}_n([\pi(x)])$ by $\chi$, concluding the proof.
\end{proof}

\begin{thm}\label{main}
$\Theta|_{\pi_1(|\Gamma|)}:\pi_1(|\Gamma|)\to \pi_1(\Gamma_{\hyp})$ is injective.
\end{thm}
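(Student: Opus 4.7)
Let $\alpha$ be a based loop in $|\Gamma|$ with $\Theta([\alpha])=1$, so that $\theta\circ\alpha$ is internally nullhomotopic in $\Gamma_{\hyp}$. I must produce a standard nullhomotopy of $\alpha$ in $|\Gamma|$.

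My first step would be to factor $\theta$ through each finite quotient $\Gamma_n$ from Lemma \ref{ext}. Since $\Gamma$ is locally finite, $\Gamma\setminus\mathring B(p,n)$ has only finitely many connected components $C_1,\dots,C_k$, and by transfer their $*$-extensions $(C_1)^*,\dots,(C_k)^*$ are precisely the internally connected components of $\Gamma^*\setminus\mathring B(p,n)$. In particular, every internally connected component of $\Gamma^*\setminus\mathring B(p,\nu)$ lies inside a unique $(C_i)^*$, so there is a natural internally continuous collapse $q_n:\Gamma_{\hyp}\to\Gamma_n$ satisfying $q_n\circ\theta=\theta_n$ on $|\Gamma|\subseteq|\Gamma|^*$. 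Postcomposing the internal nullhomotopy of $\theta\circ\alpha$ with $q_n$ gives an internal nullhomotopy of $\theta_n\circ\alpha$ in $\Gamma_n$, and since $\Gamma_n$ is a standard finite CW-complex and $\theta_n\circ\alpha$ is standard, downward transfer upgrades this to a standard nullhomotopy. Hence $\theta_n\circ\alpha$ is standardly nullhomotopic in $\Gamma_n$ for every $n\in\n$.

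It remains to deduce from this that $\alpha$ itself is nullhomotopic in $|\Gamma|$. For this I would pass to the internal universal cover $\pi:\widetilde{\Gamma_{\hyp}}\to\Gamma_{\hyp}$, which is internally a tree. The lift $\tilde\gamma$ of $\theta\circ\alpha$ based at a chosen preimage of $p$ is an internal loop (precisely because $\theta\circ\alpha$ is internally nullhomotopic). Being a loop in an internal tree, $\tilde\gamma$ traces out a hyperfinite subtree and admits a canonical internal nullhomotopy that retracts branchwise onto the basepoint. I would then lift this retraction-nullhomotopy through $\theta$ to an internal nullhomotopy of $\alpha$ in $|\Gamma|^*$: each elementary back-and-forth along a branch of the tree visits only a single infinity-vertex of $\Gamma_{\hyp}$ at a time, and the $\theta$-preimage of such a vertex in $|\Gamma|^*$ is, by the framework preceding Lemma \ref{chi} together with the identification $|\Gamma|\cong|\Gamma|_{\hyp}$ from that lemma, the internally path-connected union of a component of $\Gamma^*\setminus\mathring B(p,\nu)$ with the ends it represents. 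Concatenating the lifted pieces yields an internal nullhomotopy of $\alpha$ in $|\Gamma|^*$, whence downward transfer produces the desired standard nullhomotopy in $|\Gamma|$.

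\textbf{Main obstacle.} The decisive difficulty is the lifting of the tree-based nullhomotopy through $\theta$. A generic internal nullhomotopy of $\theta\circ\alpha$ in $\Gamma_{\hyp}$ cannot be lifted back to $|\Gamma|^*$, because $\theta$ irretrievably collapses each internally connected component of $\Gamma^*\setminus\mathring B(p,\nu)$ to a point. The utility of the internal universal cover is to furnish a nullhomotopy of the special form — branchwise retraction of a hyperfinite subtree — whose image visits the infinity-vertices of $\Gamma_{\hyp}$ in a controlled, one-at-a-time manner, and which therefore admits a piecewise lifting through $\theta$. Verifying that these piecewise lifts concatenate into a single continuous internal map $[0,1]^{2*}\to|\Gamma|^*$ will be the most delicate point; the internal path-connectedness of the $\theta$-fibers over the infinity-vertices, which is exactly what Lemma \ref{chi} supplies, is the crucial input that makes this gluing possible.
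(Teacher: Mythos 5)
Your proposal shares the paper's key idea of passing to the internal universal cover $\widetilde{\Gamma_{\hyp}}$, which is an internal tree, but the way you use it contains a fatal gap. Your strategy is: lift $\theta\circ\alpha$ to a loop in the tree, take the canonical branchwise retraction of the resulting hyperfinite subtree, and then lift this nullhomotopy back through $\theta$ to an \emph{internal} nullhomotopy of $\alpha$ in $|\Gamma|^*$. If this lifting were possible, it would prove that $\Theta$ is injective on \emph{all} of $\pi_1(|\Gamma|^*)$, not merely on the standard subgroup $\pi_1(|\Gamma|)$ --- and that stronger statement is false. Take any internal cycle $\alpha$ lying entirely inside one internally connected component $C$ of $\Gamma^*\setminus\mathring{B}(p,\nu)$ and representing a nontrivial element of the internal fundamental group of $|\Gamma|^*$ (by transfer such cycles exist whenever $\Gamma$ has cycles arbitrarily far from $p$): then $\theta\circ\alpha$ is constant, hence internally nullhomotopic, but $\alpha$ is not. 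This shows concretely why internal path-connectedness of the $\theta$-fibers (Lemma \ref{chi}) cannot suffice for your gluing step: the fibers over the infinity-vertices are whole collapsed components together with their ends, they are far from simply connected, and $\theta$ kills their entire fundamental groups. The standardness of $\alpha$ must enter the argument in an essential way, and in your second paragraph it never does. (Your first paragraph, which shows $\theta_n\circ\alpha$ is nullhomotopic in each $\Gamma_n$, only establishes the easy implication $\Theta([\alpha])=1\Rightarrow\Psi([\alpha])=1$ and is then never used.)

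The paper's proof avoids lifting anything back through $\theta$. Instead it pushes \emph{down} and takes standard parts: for each $\tau$ it projects to $\Gamma_{\hyp}$ the geodesic in $\widetilde{\Gamma_{\hyp}}$ from $\tilde p$ to $\widetilde{\theta(\alpha)}(\tau)$, obtaining internal paths $\phi_\tau$ whose images lie inside the image of $\theta(\alpha)$, and then defines a \emph{standard} homotopy $H(t,u)=\st(\phi_\tau(u\lambda_\tau))$ (or the corresponding end) directly in $|\Gamma|$. The real work --- and the place where standardness of $\alpha$ is used --- is the continuity of $H$: one reparameterizes so that crossings of edges at distance $\xi$ from $p$ have length $1/(2^\xi\rho(\xi))$, where $\rho(\xi)$ is finite for finite $\xi$ precisely because $\alpha$ is standard, and one proves the ``long common subpaths'' estimate $\phi_{\tau_0}|_{[0,\lambda_{\tau_0}-|\tau_0-\tau_1|]}=\phi_{\tau_1}|_{[0,\lambda_{\tau_1}-|\tau_0-\tau_1|]}$ coming from the tree geometry. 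None of this apparatus appears in your proposal, and without it (or a genuine substitute that exploits standardness of $\alpha$) the argument does not close.
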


The following corollary will be useful for our applications.

\begin{cor}
\label{factor}
 If $\Lambda$ is a hyperfinite graph and there exist internally continuous maps $\varphi:|\Gamma|^*\to \Lambda$, $\psi:\Lambda\to \Gamma_{\hyp}$ so that $\psi\circ\varphi=\theta$, then the map induced by $\varphi$ at the level of fundamental groups is injective when restricted to $\pi_1(|\Gamma|)$.
\end{cor}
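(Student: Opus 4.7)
The plan is to deduce this immediately from Theorem \ref{main} together with functoriality of the (internal) fundamental group. The hypothesis $\psi\circ\varphi=\theta$ yields, by functoriality, a factorization $\Theta = \psi_*\circ\varphi_*$ at the level of induced maps, where $\varphi_*\colon\pi_1(|\Gamma|^*,p)\to\pi_1(\Lambda,\varphi(p))$ and $\psi_*\colon\pi_1(\Lambda,\varphi(p))\to\pi_1(\Gamma_{\hyp},p)$. (Here I am using that $p\in\Gamma\subset B(p,\nu)$, so $\theta(p)=p$, and hence $\psi(\varphi(p))=p$, which makes the basepoints compatible.)

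Now suppose $\alpha,\beta\in\pi_1(|\Gamma|)$ satisfy $\varphi_*(\alpha)=\varphi_*(\beta)$. Applying $\psi_*$ to both sides gives $\Theta(\alpha)=\Theta(\beta)$, so by Theorem \ref{main} we conclude $\alpha=\beta$. Thus $\varphi_*|_{\pi_1(|\Gamma|)}$ is injective, as required.

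There is really no substantive obstacle here: the corollary is a one-line consequence of the main theorem together with the elementary observation that if a composition $g\circ f$ is injective on a subset $S$ of the source, then $f|_S$ must itself be injective. Note in particular that we do not need the second compatibility condition $\varphi(\hat{\Gamma}) = \psi^{-1}(\hat{\Gamma})$ that appeared in the set-up before Lemma \ref{chi}; that stronger condition was needed to identify $|\Gamma|_{\hyp}$ with $|\Gamma|_{\ns}$ as topological spaces, but for this factorization-style corollary only the identity $\psi\circ\varphi=\theta$ is used.
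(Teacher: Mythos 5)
Your proof is correct and is exactly the argument the paper intends: the corollary is an immediate consequence of Theorem \ref{main} via the factorization $\Theta=\psi_*\circ\varphi_*$ supplied by functoriality (the paper in fact leaves this proof implicit). Your side remark that only $\psi\circ\varphi=\theta$ is needed, and not the condition $\varphi(\hat{\Gamma})=\psi^{-1}(\hat{\Gamma})$, is also accurate, since that extra hypothesis is used only for Lemma \ref{chi} and the proof of the theorem itself, not for this deduction.
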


\begin{proof} (of Theorem \ref{main})
  Suppose that $\theta(\alpha)$ is null-homotopic, for some loop $\alpha:[0,1]\to|\Gamma|$ based at $p\in\Gamma$. We will construct a continuous family of paths $\gamma_t$ connecting $p$ to $\alpha(t)$. The idea for constructing such a family is the following. As $\theta(\alpha)$ is null-homotopic, it lifts to a loop in the universal cover $\widetilde{\Gamma_{\hyp}}$ of $\Gamma_{\hyp}$, which is a tree. We can then project on $\Gamma_{\hyp}$ a family of geodesics connecting a lift of $p$ to points in the lift of $\theta(\alpha)$ based at the same point, and use these projected paths to construct the required homotopy.
\par
{\bf Convenient reparameterizations.} Even though it is not absolutely necessary, we will assume that $\alpha$ makes no partial crossing of edges, meaning that if the point $\alpha(x)$ is contained in the interior of an edge then there is an interval $I$ containing $x$ so that $\alpha(I)$ is the edge containing $\alpha(x)$ and $\alpha|_I$ connects the endpoints of the said edge. We can achieve this by a homotopy in view of \cite[Lemma 2]{diestel}. Let $\rho: \n^*\to\n^*$ be the function such that $\rho(\xi)$ is the cardinality of the edges at distance at most $\xi$ from $p\in\Gamma_{\hyp}$ that are crossed by $\theta(\alpha)$, counted with multiplicity. (The distance of an edge from $p$ is the minimal distance of its endpoints from $p$.) Notice that if $\xi$ is finite, then $\rho(\xi)$ is finite as it coincides with the number of edges at distance at most $\xi$ from $p$ crossed by $\alpha$ counted with multiplicity. We can reparameterize $\theta(\alpha):[0,\lambda]\to\Gamma_{\hyp}$, for some finite $\lambda\in \R^*$, so that subintervals of $[0,\lambda]$ which correspond to crossing an edge at distance $\xi$ from $p$ have length $1/(2^{\xi}\rho(\xi))$. Notice that $\lambda$ is indeed finite because, as there are at most $\rho(\xi)$ subpaths of $\theta(\alpha)$ corresponding to crossing an edge at distance $\xi$ from $p$, we have $\lambda\leq\sum_{\xi\in\n^*}\rho(\xi)/(2^\xi\rho(\xi))=1$. Also, we can reparameterize $\alpha$ so that for each $\tau\in[0,\lambda]$ we have $\alpha(\st(\tau))=\st(\theta(\alpha)(\tau))$ if $\theta(\alpha)(\tau)\in\hat{\Gamma}$, and $\alpha(\st(\tau))$ is the end corresponding to $\theta(\alpha)(\tau)$ otherwise (the correspondence is given by Lemma \ref{chi}). Here, $\st:\hat{\Gamma}\to\Gamma$ is the map so that $\st(x)$ has infinitesimal distance from $x$.
\par
 {\bf The homotopy.} As already mentioned, the fact that $\theta(\alpha)$ is null-homotopic means that it can be lifted to a loop $\widetilde{\theta(\alpha)}$ in $\widetilde{\Gamma_{\hyp}}$, based, say, at $\tilde{p}$. Fix $t\in[0,\st(\lambda)]$ and choose any $\tau\in[0,\lambda]$ so that $\st(\tau)=t$. Denote by $\tilde{d}$ the metric of $\widetilde{\Gamma_{\hyp}}$. Let $\delta_{\tau}:[0,\tilde{d}(\tilde{p},\widetilde{\theta(\alpha)}(\tau))]\to\Gamma_{\hyp}$ be the projection to $\Gamma_{\hyp}$ of a geodesic from $\tilde{p}$ to $\widetilde{\theta(\alpha)}(\tau)$. We can reparameterize $\delta_\tau$ to obtain $\phi_\tau:[0,\lambda_\tau]\to\Gamma_{\hyp}$ with the property that subintervals of $[0,\lambda_\tau]$ which correspond to crossing an edge at distance $\xi$ from $p$ have length $1/(2^{\xi}\rho(\xi))$. Notice that the image of $\delta_\tau$ (and so that of $\phi_\tau$) is contained in that of $\theta(\alpha)$. In fact, given any path in a tree, the (image of the) geodesic connecting its endpoints is contained in (the image of) the path. As a consequence, $\phi_\tau$ is the concatenation of subpaths of $\theta(\alpha)$ with disjoint domain and so $\lambda_\tau$ is finite. Finally, define $\gamma_t:[0,1]\to |\Gamma|$ by $\gamma_t(u)=\st(\phi_\tau(u\lambda_\tau))$ if $\phi_\tau(u\lambda_\tau)\in \hat{\Gamma}$, and let $\gamma_t(u)$ be the end corresponding to $\phi_\tau(u\lambda_\tau)$ otherwise (we are using Lemma \ref{chi} again).
\par
We claim that $H(t,u)=\gamma_t(u)$ is continuous and hence provides a homotopy from $\alpha$ to the trivial loop.
\par
{\bf Long common subpaths.} The key observation is that $\phi_{\tau_0},\phi_{\tau_1}$ differ at most in final intervals of the respective domains of length bounded by $|\tau_0-\tau_1|$, meaning $$\phi_{\tau_0}\big|_{\left[0,\lambda_{\tau_0}-|\tau_0-\tau_1|\right]}=\phi_{\tau_1}\big|_{\left[0,\lambda_{\tau_1}-|\tau_0-\tau_1|\right]}.$$
This follows from the fact that geodesics $\gamma_0,\gamma_1$ in a tree starting at the same point and so that the distance between their final points is $d$ share a common initial subgeodesic of length at least $l(\gamma_0)-d$, and the remaining subgeodesics of both $\gamma_0$ and $\gamma_1$ are contained in the geodesic connecting the final points of $\gamma_0,\gamma_1$. In particular, we have that $\delta_{\tau_0}$ shares a common initial subpath of length $l(\delta_{\tau_0})-\tilde{d}(\widetilde{\theta(\alpha)}(\tau_0),\widetilde{\theta(\alpha)}(\tau_1))$ with $\delta_{\tau_1}$. We have to check that when reparameterizing to obtain $\phi_{\tau_i}$, the maximal such subpath $\beta$ becomes a subpath of $\phi_{\tau_i}$ containing $\phi_{\tau_i}\big|_{\left[0,\lambda_{\tau_i}-|\tau_0-\tau_1|\right]}$. This follows essentially from the fact that everything is parameterized ``in the same way.''  Let us make this more precise. Consider the final subpath $\phi'_i$ of $\phi_{\tau_i}$ connecting the final point of $\beta$ to $\widetilde{\theta(\alpha)}(\tau_i)$. Arguing as above, $\phi'_i$ is contained in the projection on $\Gamma_{\hyp}$ of the image of a geodesic from $\widetilde{\theta(\alpha)}(\tau_0)$ to $\widetilde{\theta(\alpha)}(\tau_1)$, which in turn is contained in $\theta(\alpha)([\tau_0,\tau_1])$, and moreover, $\phi'_i$ is the concatenation of subpaths of $\theta(\alpha)|_{[\tau_0,\tau_1]}$ with disjoint domains. So, the domain of $\phi'_i$ has length at most $|\tau_0-\tau_1|$, which implies that the domain of the reparameterization of $\beta$ as a subpath of $\phi_{\tau_i}$ contains $\left[0,\lambda_{\tau_i}-|\tau_0-\tau_1|\right]$, as required.
\par
{\bf Proof of continuity.} The key observation implies that $t\mapsto \st(\lambda_\tau)$ is continuous. Hence we can check continuity of the function $$H':\{(t,u)\in [0,1]\times \R_{\geq 0}:u\leq \st(\lambda_\tau)\}\to |\Gamma|$$ given by $H'(t,u)=\gamma'_{t}(u)$. Here $\gamma'_t$ is defined by $\gamma'_t(u)=\st(\phi_\tau(\sigma))$ if $\phi_\tau(\sigma)\in \hat{\Gamma}$, where $\st(\sigma)=u$ and $\sigma=\sigma(u)\in[0,\lambda_\tau]$, while $\gamma'_{t}(u)$ is the end corresponding to $\phi_\tau(\sigma)$ otherwise. $H$ is the composition of $H'$ and the continuous function $(t,u)\mapsto (t,u\cdot \st(\lambda_{\tau}))$, so that $H$ is continuous if $H'$ is. (Notice that $\phi_\tau(\sigma(u\cdot \st(\lambda_{\tau})))$ is infinitesimally close to $\phi_\tau(u\cdot \lambda_\tau)$ if one of them is in $\hat{\Gamma}$ and that otherwise the subpath of $\phi_\tau$ connecting them does not intersect $\hat{\Gamma}$ so that they correspond to the same end.)
\par
We first show that $H'$ is continuous at $(t,u)$ if $H'(t,u)\in\Gamma$. Indeed, we claim that for $n\geq d(p,\phi_\tau(\sigma(u)))+1$ and $\epsilon<1/2$, if $|t-t'|< \epsilon/(2^{n+1}\rho(n+1))$ and $|u-u'|< \epsilon/(2^{n+1}\rho(n+1))$, then we have $d(H'(t,u),H'(t',u'))\leq 3\epsilon$. To see this, first observe that if $|t-t'|< \epsilon/(2^{n+1}\rho(n+1))$, then $\phi_\tau$ and $\phi_{\tau'}$ either differ by paths contained in $B(p,n+1)$ whose lengths are each bounded above by $\epsilon$ or by paths contained in the complement of $B(p,n-1)$. Indeed, keeping into account our choice of parameterization, if the endpoint of $\phi_\tau$ is in $B(p,n)$, then $\phi_\tau|_{[\max\{0,\lambda_\tau-|\tau-\tau'|\},\lambda_\tau]}$ is contained in $B(p,n+1)$ and hence has length at most $\epsilon$, and otherwise the said subpath cannot travel through an edge at distance $n-1$ from $p$.
To finish the proof of our claim, it suffices to observe that if $\beta$ is any path contained in $B(p,n+1)$ satisfying our choice of the parameterizations (e.g. $\phi_{\tau},\phi_{\tau'}$) and if $|\sigma-\sigma'|< \epsilon/(2^{n+1}\rho(n+1))$, then $d_h(\beta(\sigma),\beta(\sigma'))<\epsilon$. 
\par
Suppose instead that $H'(t,u)$ is an end. We claim that if $|t-t'|< 1/(2^{n+1}\rho(n+1))$ and $|u-u'|< 1/(2^{n+1}\rho(n+1))$, then for $\sigma=\sigma(u)$ and $\sigma'=\sigma'(u)$, $\phi_\tau(\sigma)$ can be connected to $\phi_{\tau'}(\sigma')$ by a path lying outside $B(p,n)$, which proves the continuity of $H'$ at $(t,u)$ in view of the nonstandard characterization of the topology of $|\Gamma|$ and because $\phi_\tau(\sigma)$, $\phi_{\tau'}(\sigma')$ represent the same point of $|\Gamma|$ as $H'(t,u)$, $H'(t',u')$.
\par
With the given restrictions on $t',u'$, we have that $\phi_{\tau},\phi_{\tau'}$ differ by paths contained in some connected component $C_1$ of $\Gamma_{\hyp}\backslash B(p,n)$ and also $\phi_{\tau'}(\sigma)$, $\phi_{\tau'}(\sigma')$ must lie in the same connected component $C_2$ of $\Gamma_{\hyp}\backslash B(p,n)$. If $C_1\neq C_2$ then $\phi_{\tau}$ and $\phi_{\tau'}$ coincide in $C_2$, so the path connecting $\phi_\tau(\sigma)$ to $\phi_{\tau'}(\sigma')$ outside $B(p,n)$ can be taken to be a subpath of $\phi_\tau$. If $C_1=C_2$ then by definition $\phi_{\tau'}(\sigma'), \phi_{\tau'}(\sigma)$ and $\phi_{\tau'}(\sigma),\phi_{\tau}(\sigma)$ lie in the same connected component of $\Gamma_{\hyp}\backslash B(p,n)$. 
\end{proof}

%Let us summarize everything in the following

%\begin{thm}
%\label{main}
%Associating to each loop $\alpha$ in $|\Gamma|$ based at $p$ the internal loop $\Theta(\alpha)$ in $\Gamma_{\hyp}$ based at $p$ yields an injective group morphism $$[\alpha]\mapsto [\Theta(\alpha)]:\pi_1(|\Gamma|)\to \pi_1(\Gamma_{\hyp}).$$
%\end{thm}

\subsection{The image of $\Theta$}

%\noindent Recall from the introduction that in \cite{diestel}, the authors construct two group embeddings:
%$$\pi_1(|\Gamma|)\to F_\infty \to F^*,$$ where $F_\infty$ is the group of reduced infinite words and $F^*$ is an inverse limit of finitely generated free groups.  Moreover, the authors are able to characterize the image of each of the embeddings.  

It would be nice if we could characterize the image of our embedding $\Theta|_{\pi_1(|\Gamma|)}$, but unfortunately we are currently unable to accomplish this goal.  In this subsection, we discuss some of the difficulties we face in this endeavor.

Once again, let us consider the 1-way sideways infinite ladder $\Gamma$ from the Introduction.  Below, in Figure \ref{sptree}, we have $\Gamma_{\hyp}$ with its internal spanning tree and chords $e_1,e_2,\ldots,e_\nu$.

\begin{figure}[h]
\includegraphics[scale=0.8]{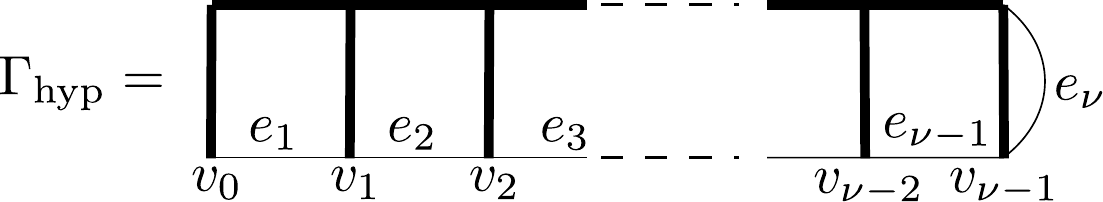}
\caption{}
\label{sptree}
\end{figure}
 The question we face is which internal words are in the image of the embedding $\Theta:\pi_1(|\Gamma|)\to \pi_1(\Gamma_{\hyp})$?  Here is a word that is in the image of this embedding: $e_1e_2\cdots e_\nu$.  Indeed, this word arises from the loop $\alpha$ that travels down the bottom edge of the ladder to the unique end and then returns along the top edge of the ladder.  However, here is a word that is \emph{not} in the image of the embedding:  $e_1e_2\cdots e_{\nu-1}$.  Indeed, in some sense, this word should result from the same loop that gave rise to $e_1e_2\cdots e_\nu$, but it is impossible for the standard loop to ``stop'' at $e_{\nu-1}$ and continue upwards.  More generally, for any $\eta\in \n^*\setminus \n$ with $\eta<\nu$, the word $e_1e_2\cdots e_{\eta}$ should not be in the image of the embedding.

We also need to ensure that words representing ``loops at infinity'' are not in the image of our embedding.  For example, consider the word $$e_1e_2\cdots e_{\nu-1}e_\nu e_{\nu-1}^{-1}\cdots e_2^{-1}e_1^{-1}.$$  Indeed, if this word arose from some loop $\alpha$, then by transfer, $\alpha$ would have made a loop at some finite stage $n\in \n$ and the image of $[\alpha]$ under our embedding would have been $e_1e_2\cdots e_{n-1}e_ne_{n-1}^{-1}\cdots e_2^{-1}e_1^{-1}$.  With these hurdles in mind, we pose the following

\begin{question}
What is the image of the embedding $\Theta:\pi_1(|\Gamma|)\to \pi_1(\Gamma_{\hyp})$?
\end{question}

%\begin{nrmk}
%In \cite{diestel}, it is shown that $\pi_1(|\Gamma|)$ embeds in an inverse system of finitely generated free groups, where the inverse system is indexed by the family of finite subsets of some countable set.  Using our approach, we are able to take an inverse system indexed by $\omega$.
%\end{nrmk}

\subsection{Variations on the construction of $\Gamma_{\hyp}$}

The combinatorial characterization of $\pi_1(|\Gamma|)$ given in \cite{diestel} also applies to certain subspaces of $|\Gamma|$, the so-called \emph{standard subspaces} of $|\Gamma|$.  The subspace $H$ of $|\Gamma|$ is called \emph{standard} if $H$ is closed, connected, and contains every edge of which it contains an inner point.  We leave it to the reader to check that our construction is readily adaptable to the case of a standard subspace of $|\Gamma|$.

For applications Section \ref{appl}, it will be important to note that, in view of Corollary \ref{factor}, we can construct $\Gamma_{\hyp}$ in
the following way. Let $\{\Delta_i\}$ be an internal collection of
disjoint, internally connected subgraphs of $\Gamma^*$ disjoint from $\Gamma_{\f}$ such that there are hyperfinitely many vertices not contained
in any $\Delta_i$; we refer to such a collection as a \emph{good collection of subgraphs of $\Gamma^*$}. We then let $\theta$ be the map that collapses each $\Delta_i$ to a point and set $\Gamma_{\hyp}=\theta(\Gamma^*)$.  The above construction of $\Gamma_{\hyp}$ is the special case when we consider the $\Delta_i$'s to be the internally connected components of $\Gamma^*\setminus B(p,\nu)$; we will refer to this construction of $\Gamma_{\hyp}$ as the \emph{standard model} for $\Gamma_{\hyp}$.
%We leave it to the reader to check that the theorems that we proved above hold for this more general construction of $\Gamma_{\hyp}$.

\section{Connection to the results in \cite{diestel}}\label{recover}

\subsection{Embedding into an inverse limit of free groups}

\noindent In this subsection, we observe that $\Theta:\pi_1(|\Gamma|)\to \pi_1(\Gamma_{\hyp})$ factors through an injective map $\Psi:\pi_1(|\Gamma|)\to \varprojlim F_n$ for certain finitely generated free groups $F_n$. For $n\in\n$, set $\theta_n:\Gamma\to \Gamma_n$ to be the map collapsing each connected component of $\Gamma\setminus \mathring{B}(p,n)$ to a point. Set $F_n:=\pi_1(\Gamma_n)$.  There are natural continuous maps $\rho^m_n:\Gamma_m\to\Gamma_n$ when $m\geq n$, so that $\{F_n\}$ is an inverse system of groups. By Lemma \ref{ext}, we have a continuous extension of $\theta_n$ to a map $\theta_n:|\Gamma|\to \Gamma_n$ satisfying $\theta_n=\rho^m_n\circ \theta_m$ whenever $m\geq n$.  We thus have a map $\Psi:\pi_1(|\Gamma|)\to \varprojlim F_n$.

Suppose we choose our nonstandard extension $\Gamma^*$ to be $\Gamma^\n/\mu$ for some non-principal ultrafilter $\mu$ on $\n$, and choose $\nu=[(\operatorname{id}:\n\to\n)]_\mu$.  Then $\Gamma_{\hyp}=\prod_\mu \Gamma_n$.  Consequently, $\pi_1(\Gamma_{\hyp})=\prod_\mu F_n$. Now consider the map $\Phi:\varprojlim F_n\to \prod_\mu F_n$ given by $\Phi((x_n))=[(x_n)]_\mu$. Then $\Phi$ is injective:  if $x_n=y_n$ $\mu$-a.e., then since $\mu$ is nonprincipal, $x_n=y_n$ for infinitely many $n$.  Since $(x_n),(y_n)\in \varprojlim F_n$, it follows that $x_n=y_n$ for all $n$.  

We leave it to the reader to check that $\Theta=\Phi\circ\Psi$.  It follows that $\Psi$ is injective.

\subsection{The Standard Description of $\pi_1(|\Gamma|)$}

After seeing an initial draft of this article, Diestel asked us whether we could recover the main theorem of \cite{diestel}, namely Theorem 15.  After some further effort, we can indeed accomplish this.  First, let us briefly describe the content of Theorem 15.  Fix a topological spanning tree $T$ of $|\Gamma|$, namely an arc-connected, closed subspace of $|\Gamma|$ that does not contain any circle (that is, a homeomorphic image of $\mathbb{S}^1$) and which contains every edge of which it contains an interior point.  We let $\{e_i \ : \ i\in \n\}$ enumerate the chords of $T$, namely the edges of $\Gamma$ not in $T$, equipped with a fixed orientation.  (We may assume that there are infinitely many chords, for otherwise $|\Gamma|$ is homotopy equivalent to a finite graph and there is nothing to prove.)

Given a loop $\alpha$ in $|\Gamma|$ based at $p$, one can consider the trace of $\alpha$ on the chords, giving a countable ``word'' $w_\alpha$, which should be viewed as a function from some countable linearly ordered set $S$ to the set $A:=\{e_i^{\pm 1} \ : \ i\in \n\}$.  The authors of \cite{diestel} define a suitable notion of reduction of words and prove that every word $w$ reduces to a unique reduced word $r(w)$.  They let $F_\infty$ denote the set of reduced words and endow it with the obvious group multiplication (namely $r(w)\cdot r(w')$ is the unique reduced word associated to the concatenation of $w$ and $w'$).  They prove that if $\alpha$ and $\beta$ are homotopic loops based at $p$, then $r(w_\alpha)=r(w_\beta)$, whence they obtain a map $[\alpha]\mapsto r(w_\alpha):\pi_1(|\Gamma|)\to F_\infty$.  Furthermore, they prove that this map is \emph{injective} and characterize the image of this map as precisely the set of words whose monotonic subwords converge (we will explain this terminology later in the next subsection).  They then show that $F_\infty$ embeds into an inverse limit of finitely generated free groups by mapping $r(w)$ to the family $(r(w\upharpoonright I))_{I\subseteq S \text{ finite}}$.  They characterize the image of this latter embedding as the set of elements of the inverse limit such that each letter appears only finitely often.  As we remarked in the introduction, this latter embedding is purely algebraic and the nonstandard set-up has nothing to say about this portion of the theorem.

By far the hardest part of the paper \cite{diestel} is their Lemma 14, which shows that if $r(w_\alpha)=\emptyset$ for a loop $\alpha$ based at $p$, then $\alpha$ is nullhomotopic, implying the injectivity of the map $\pi_1(|\Gamma|)\to F_\infty$.  In what follows, we use our framework to give another proof of Lemma 14.  Although our proof of Lemma 14 appears quite involved (due to the nebulous relationship between topological spanning trees in $|\Gamma|$ and internal spanning trees in $\Gamma_{\hyp}$), our proof is still significantly shorter than the standard proof, which is 14 pages in length.

\subsection{Recovering The Standard Description}

We are going to build a hyperfinite graph $\Lambda$, an internal spanning tree $T'$ of $\Lambda$, and an internally continuous map $\varphi:|\Gamma|^*\to \Lambda$. The construction will be carried out in such a way that if $e$ is a chord of $T^*$, then either $\varphi$ maps $e$ to a point in $\Lambda$ or else it maps $e$ to a chord of $T'$, which can and will be identified with $e$. Every step of our construction has been conceived for this to obviously stay true, so we will make no further mention of it during the construction. 

We start with $\Lambda_0:=B(p,\nu)$, viewed as an internal subgraph of $\Gamma^*$, and $T_0:=T^*\cap \Lambda_0$.  Let $w_1,\ldots,w_M$ enumerate all pairs of points of distance $\nu$ from $p$.  If $T_0$ is already internally connected, then set $\Lambda^\#:=\Lambda_0$ and $T^\#:=T_0$.  Otherwise, there must be $i\in \{1,\ldots,M\}$ such that, adding an edge at $w_i$ to both $\Lambda_0$ and $T_0$ yields no internal cycles in $T_0\cup \{w_i\}$.  Indeed, if this is not true, then $T_0$ was already internally connected:  fix $x,y\in T_0$ and take paths in $T_0$ connecting $x$ to $x'$ and $y$ to $y'$, where $x',y'$ have distance $\nu$ from $p$.  By assumption, adding an edge connecting $x'$ to $y'$ creates an internal loop, meaning that there is a path in $T_0$ connecting $x'$ to $y'$, whence there is a path in $T_0$ connecting $x$ to $y$.  Let $i_0\in \{1,\ldots,M\}$ be minimal such that adding an edge at $w_i$ creates no internal cycles in $T_0\cup \{w_i\}$.  Set $\Lambda_1:=\Lambda_0\cup \{w_{i_0}\}$ and $T_1:=T_0\cup \{w_{i_0}\}$.  

If $T_1$ is internally connected, then we set $\Lambda^\#:=\Lambda_1$ and $T^\#=T_1$.  Otherwise, we continue as above:  there must be $i\in \{i_0+1,\ldots,M\}$ such that adding an edge at $w_i$ creates no new internal cycles.  Indeed, suppose this was not true.  Fix $x,y\in T_1$ and let $x'$ and $y'$ be as before.  If $\{x',y'\}=w_{i_0}$, then $x$ and $y$ are connected in $T_1$.  Otherwise, $\{x',y'\}=w_i$ for some $i\neq i_0$.  If $i<i_0$, then by the previous paragraph, $x$ and $y$ are connected in $T_0\subseteq T_1$.  If $i>i_0$, then by our assumption, adding an edge at $w_i$ creates an internal cycle, meaning that $x'$ and $y'$ are connected in $T_1$, whence $x$ and $y$ are connected in $T_1$.  Set $i_1\in \{i_0+1,\ldots,M\}$ to be minimal with this property and set $\Lambda_2:=\Lambda_1\cup \{i_1\}$ and $T_2:=T_1\cup \{i_1\}$.

We continue in this fashion, ending up with a hyperfinite graph $\Lambda^\#$ and an internal spanning tree $T^\#$ of $\Lambda^\#$.  $\Lambda$ and $T'$ are going to be obtained by adding further vertices and edges as we now explain.

As the first part of $\varphi$, we collapse all connected components of $\Gamma^*\setminus B(p,\nu+1)$ to points and extend this map to $|\Gamma|^*$ (see Lemma \ref{ext}), where now we think of these hyperfinitely many points as points of distance $\nu+1$ from $p$.  As such, there are hyperfinitely many edges $z_1,\ldots,z_N$ of $\Gamma^*$ connecting points of radius $\nu$ to these points of radius $\nu+1$.  Here, if $x$ and $y$ have distance $\nu+1$ from $p$, are collapsed to the same point, and are both connected to some vertex of radius $\nu$, then we include both edges in our enumeration $z_1,\ldots,z_N$.  Let us write $z_i:=(x_i,y_i)$, with $x_i$ of distance $\nu$ and $y_i$ of distance $\nu+1$.

%We begin with $z_1$.  If $z_1\in T^*$, we have $\varphi$ collapse it further to $x_1$.  Otherwise, we add $z_1$ to both $\Lambda^\#$ and $T^\#$.  We have $\varphi$ act identically on $z_1$.  Note that $\Lambda^\#$ is still connected and $T^\#$ is still an internal spanning tree of $\Lambda^\#$.

We now show how to inductively deal with the $z_j$'s.  We split up into two cases.

\

\noindent \textbf{Case 1:}  No edge containing $y_j$ has been previously collapsed to $x_j$ (e.g., for $j=1$).  If $z_j\in T^*$, then collapse it to $x_j$.  Otherwise, suppose $z_j\notin T^*$.  We then add $z_j$ to $\Lambda$ and $T'$, and have $\varphi$ act identically on $z_j$.
%If $y_2=y_1$, then we do not add anything to $T'$ and observe that it is still an internal spanning tree of $\Lambda$.  Otherwise, we add $z_2'$ to both $\Lambda$ and $T'$ and once again note that $T'$ is an internal spanning tree of $\Lambda$.  

\

\noindent \textbf{Case 2:}  Some edge containing $y_j$ has been previously collapsed to $x_j$.  First suppose that $z_j\in T^*$.  We then have $\varphi$ map $z_j$ to the unique path in $T'$ connecting $x_j$ to $y_j$ and do nothing to $\Lambda$.  It is extremely important to observe that this path does not enter the finite portion of $T'$, for otherwise $T$ would contain a circle (either consider the ``trace'' in $|\Gamma|$ of a reparameterization of the said path or notice that there would exist $n$ so that for all $r$ there are two points in $T$ at distance at least $r$ from $p$ so that the path connecting them in $T$ intersects $B(p,n)$, and use Arzel\`a-Ascoli).  We now suppose that $z_j\notin T^*$.  Then we add $z_j$ as a new edge to $\Lambda$, but add nothing to $T'$.  We observe that $T'$ is still a spanning tree of $\Lambda$.

\

%One now inductively continues dealing with the remainder of the $z_i$'s exactly like one dealt with $z_2$.
In this way, we obtain our hyperfinite graph $\Lambda$, our internal spanning tree $T'$ of $\Lambda$, and our internally continuous map $\varphi:|\Gamma|^*\to \Lambda$.  We observe that the chords of $T'$ are also chords of $T^*$.

Let $\nu'$ be the minimal distance from $p$ of vertices involved in the cases where edges $z_i$ were mapped by $\varphi$ to paths in $T'$; observe that $\nu'\in \n^*\setminus \n$.  Let $\Gamma_{\hyp}$ be as before except using $\nu'$ rather than $\nu$.  Let $\theta:|\Gamma|^*\to \Gamma_{\hyp}$ be as before and observe that $\theta$ factors as $\theta=\rho\circ \varphi$, where $\rho:\Lambda\to \Gamma_{\hyp}$ is the natural collapsing map.  Since $\theta$ is $\pi_1$-injective, it follows that $\varphi$ is $\pi_1$-injective (Corollary \ref{factor}).

%Recall that each chord $e$ of $T^*$ is either collapsed to a point by $\varphi$ or is mapped to a chord of $T'$ which can be identified with $e$.
%Suppose that the chords of $T$ are $\{e_i \ | \ i\in \n\}$. For a loop $\alpha$ based at $p$, let $w_\alpha$ be the word on $A:=\{e_i^{\pm 1} \ | \ i\in \n\}$ that $\alpha$ traces. 
We are finally ready to prove Lemma 14 from \cite{diestel}.

\begin{prop}[\cite{diestel}, Lemma 14]
Suppose that $\alpha:[0,1]\to |\Gamma|$ is such that $w_\alpha$ reduces to the empty word.  Then $\alpha$ is nullhomotopic.
\end{prop}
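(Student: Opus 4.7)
The strategy is to exploit the $\pi_1$-injectivity of $\varphi$ given by Corollary~\ref{factor}: it suffices to show that $\varphi(\alpha)$ is internally nullhomotopic in $\Lambda$. Since $\Lambda$ is an internally-finite connected graph with internal spanning tree $T'$, by transfer of the classical finite-graph result, $\pi_1(\Lambda)$ is the internally-free group on the (hyperfinite) set of chords of $T'$, and an internal loop in $\Lambda$ is internally nullhomotopic if and only if its hyperfinite trace word on the chords of $T'$ reduces internally (i.e.\ as an internally-finite word) to the empty word.

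The second step is to analyze the hyperfinite trace $\tilde{w}$ of $\varphi(\alpha)$ on the chords of $T'$. By construction the chords of $T'$ are chords of $T^*$, and they split into two classes: standard chords (identified with the chords of $T$, all of which lie in $B(p,\nu)$) and nonstandard chords (nonstandard-index chords of $T^*$ sitting inside $B(p,\nu)$, together with the Case~2 edges $z_j\notin T^*$ added at radius $\nu$--$\nu+1$). Because $\varphi$ is the identity on the standard part of $\Gamma^*$, the subword of $\tilde{w}$ formed by the standard chord letters, read in its internal order, coincides with the word $w_\alpha$. The hypothesis that $w_\alpha$ reduces to the empty word then supplies a non-crossing pairing of the standard chord letters of $\tilde{w}$.

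The remaining task, and the main obstacle, is to pair the nonstandard chord letters of $\tilde{w}$ in an internally non-crossing fashion compatible with the pairing already obtained for the standard letters: a combined non-crossing pairing of all letters would witness the internal reduction of $\tilde{w}$ to the empty word and conclude the proof by the first step. The nonstandard chord letters appear only during the internal-time subintervals in which $\alpha^*$ leaves $\Gamma_{\f}$, that is, during the excursions of $\alpha$ through ends of $|\Gamma|$; each such excursion is a hyperfinite subpath of $\varphi(\alpha)$ with both endpoints in the standard part. The plan is to show that within each excursion, the block of nonstandard letters is internally cancellable. Here one should exploit the careful construction of $T'$—in particular the Case~2 convention of routing $z_j\in T^*$ through the unique $T'$-path rather than creating a new chord—so that the nonstandard-chord pattern produced by an excursion reflects only the local topological behavior of $\alpha$ near the end it visits; combined with the standard pairing from $w_\alpha$, this should then yield the required internal non-crossing pairing of all of $\tilde{w}$. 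The reconciliation between the topological structure of $T$ in $|\Gamma|$ and the internal-combinatorial structure of $T'$ in $\Lambda$ is the delicate point and the reason the argument, though shorter than \cite{diestel}'s Lemma~14, is still nontrivial.
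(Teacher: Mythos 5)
Your reduction of the problem is the right one and matches the paper's: by Corollary \ref{factor} it suffices to show that $\varphi(\alpha)$ is internally nullhomotopic, i.e.\ that its hyperfinite trace on the chords of $T'$ reduces internally to the empty word. But the way you propose to produce that internal reduction has a genuine gap, and its central claim is false. First, the reduction of $w_\alpha$ supplied by the hypothesis is an \emph{external} pairing of the countable set $S$; an external pairing of the standard letters cannot by itself witness internal reducibility of a hyperfinite word, so there is nothing internal to ``combine'' with a pairing of the remaining letters. Second, the claim that ``within each excursion the block of nonstandard letters is internally cancellable'' fails in general: take the loop in the one-way ladder that runs out to the end along the bottom, returns along the top, then runs out along the top and returns along the bottom. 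Its trace $(e_1e_2\cdots)^\frown(\cdots e_2^{-1}e_1^{-1})$ reduces to the empty word, but the nonstandard letters $e_\eta$ ($\eta$ infinite) contributed by the first excursion are cancelled only against nonstandard letters of the \emph{second} excursion, not within their own. So the step you defer (``the plan is to show\ldots'') is not merely unfinished; as formulated it cannot be completed.

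The repair is to let transfer do all of the pairing at once, which is what the paper does: the given reduction $R$ of $w_\alpha$ transfers to an internal reduction $R^*$ of $(w_\alpha)^*$ to the empty word, and restricting $R^*$ to the letters indexed by the hyperfinite set $I=\{i : e_i\in E(\Lambda)\}$ (via the transferred restriction lemma, (1) on p.~8 of \cite{diestel}) yields an internal reduction of $w^*\upharpoonright I$ to the empty word. Since $w^*\upharpoonright I$ is exactly the trace of $\varphi(\alpha)$ on the chords of $T'$ (here the only property of the construction that is needed is that every chord of $T'$ is a chord of $T^*$, on which $\varphi$ acts identically), and a hyperfinite word that reduces in Diestel's sense reduces in the usual sense by transfer, this finishes the proof. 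In that argument your ``nonstandard chord letters'' are simply letters of $(w_\alpha)^*$ sitting at nonstandard positions of $S^*$; they arrive already paired by $R^*$, and no excursion-by-excursion analysis is needed.
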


\begin{proof}
Let $I\subseteq \n^*$ be such that $i\in I \leftrightarrow e_i\in E(\Lambda)$, so $I$ is hyperfinite.  Set $w:=w_\alpha$ and say $w:S\to A$, where $S$ is some countable, linearly ordered set.  We have a reduction $R$ of $w$ to the empty word, whence $R^*$ is an internal reduction of $w^*$ to the empty word.  By (1) on page 8 of \cite{diestel}, $R_{\Lambda}:=\{(s,s')\in R^* \ | \ w(s)\in A_I\}$ is an internal reduction of $w^*\upharpoonright I$, and this reduction must be to the empty word as well.  Since $w^*\upharpoonright I$ is a hyperfinite word, $w^*\upharpoonright I$ internally reduces (in the usual sense) to the empty word.  Since $w^*\upharpoonright I$ is the trace of $\varphi(\alpha)$ on the chords of $T'$, we see that $\varphi(\alpha)$ is internally nullhomotopic, whence $\alpha$ is nullhomotopic. 
\end{proof}

We now proceed to prove the other difficult part of Theorem 15, namely the part which characterizes the image of the embedding $\pi_1(|\Gamma|)\to F_\infty$.  To explain this, we need some terminology.  First, given a word $w:S\to A$, we say that a subword $w\upharpoonright S':S'\to A$ is \emph{monotonic} if $S'$ is infinite and we can write $S':=\{s_0,s_1,s_2,\ldots,\}$ where either $s_0<s_1<s_2<\cdots$ or $s_0>s_1>s_2>\cdots$.  We say that the monotonic subword $w\upharpoonright S'$ of $w$ \emph{converges} if there exists an end $\e(r)$ of $|\Gamma|$ such that, whenever we choose $x_n\in w(s_n)$, the sequence $(x_n)$ converges to $\e(r)$.  It is very easy to see that, for any loop $\alpha$ in $|\Gamma|$ based at $p$, every monotonic subword of the word $w_\alpha$ converges.

\begin{prop}[Part of \cite{diestel}, Theorem 15]
Suppose that the word $w:S\to A$ has the property that all its monotonic subwords converge. Then there exists a loop $\alpha$ so that $w=w_\alpha$. 
\end{prop}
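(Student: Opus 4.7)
The plan is to build an internal loop $\tilde\alpha$ in $\Gamma^*$ that realizes a hyperfinite extension of $w$ through the internal tree $T^*$, and then take its standard part; the monotonic-subword-convergence hypothesis will be what ensures the standard part is continuous at parameters whose image is an end.

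By $\aleph_1$-saturation, pick a hyperfinite internal linearly ordered set $H$ with $S\subseteq H\subseteq S^*$ (inheriting the internal order from $S^*$), and set $\tilde w := w^*\upharpoonright H$. A key preliminary: each standard chord $e_i$ appears only finitely often in $w$, because an infinite strictly monotonic subword constantly equal to $e_i^{\pm 1}$ would force the two (standard) endpoints of $e_i$ to converge to an end, which is impossible. Consequently $(w^*)^{-1}(e_i^{\pm 1}) = w^{-1}(e_i^{\pm 1})\subseteq S$ for every standard $i$, and for $h\in H\setminus S$ the chord $\tilde w(h)$ is necessarily at infinite internal distance from $p$. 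Build $\tilde\alpha$ based at $p$ by internally concatenating, in the internal order of $H$, the chord traversals $\tilde w(h)$ linked by the unique internal arcs in $T^*$ between the relevant endpoints, with arcs from/to $p$ at the start and end. Parametrize exactly as in the proof of Theorem \ref{main}: each edge crossed at internal distance $\xi$ from $p$ occupies a parameter interval of length $1/(2^\xi\rho(\xi))$, where $\rho(\xi)$ is the internal count (with multiplicity) of edges at distance $\le\xi$ crossed by $\tilde\alpha$; the total internal length is finite and we rescale so the domain is $[0,1]^*$.

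Define $\alpha(t) := \st(\tilde\alpha(\tau))$ when $\tilde\alpha(\tau)\in\hat\Gamma$ (for any $\tau$ with $\st(\tau)=t$), and otherwise let $\alpha(t)$ be the end of $|\Gamma|$ corresponding to $\tilde\alpha(\tau)$ via Lemma \ref{chi}. The identity $w_\alpha=w$ is then immediate: standard chords are crossed by $\tilde\alpha$ only at the standard positions $s\in S$ with $w(s)=e_i^{\pm 1}$, in the $S$-order, since nonstandard $h$ correspond to nonstandard chords by the preliminary observation. The main obstacle — and the only place where the monotonic-subword hypothesis enters essentially — is showing $\alpha$ is well-defined and continuous. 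At $t$ with $\alpha(t)\in\Gamma$ this follows from the parametrization just as in the proof of Theorem \ref{main} (the analogue of the bound on $d(H'(t,u),H'(t',u'))$ there). At $t$ with $\alpha(t)$ an end $\e(r)$, one must verify that for each standard $n$ and all $\tau\approx\tau_0$ with $\st(\tau_0)=t$, $\tilde\alpha(\tau)$ and $\tilde\alpha(\tau_0)$ lie in a common internal connected component of $\Gamma^*\setminus B(p,n)$. The parametrization again controls the argument for staying outside $B(p,n)$; the monotonic-subword-convergence hypothesis enters via the standard-to-nonstandard characterization of convergence in $|\Gamma|$, yielding that the chords $\tilde w(h)$ for $h\in H$ in the halo of the nonstandard index $h_0$ picked out by $\tau_0$ all represent the same end $\e(r)$. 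Otherwise one could extract from $w$ a standard monotonic subword witnessing two distinct limit ends, contradicting the assumption.
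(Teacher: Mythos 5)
The construction you outline follows the paper's strategy, but it silently assumes the one fact that carries almost all of the difficulty: that the internal loop you build crosses each \emph{standard} edge of $\Gamma$ only finitely many times, so that $\rho(n)\in\n$ for every standard $n$. Your preliminary observation handles this only for standard \emph{chords} of $T$. It says nothing about the standard edges of the spanning tree itself, which are crossed not by the letters of $\tilde w$ but by the connecting arcs in $T^*$ that you insert between consecutive chords; a priori these arcs could traverse a fixed standard edge $e$ of $T$ an unlimited number of times in total. If that happens, each such crossing receives infinitesimal parameter length $1/(2^{\xi}\rho(\xi))$, the standard part $\alpha$ is not even well defined on that edge (distinct $\tau$ with the same standard part land at different points of $e$), and $w_\alpha=w$ fails because $\alpha$ never actually traverses $e$. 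Ruling this out is precisely the ``Claim'' in the paper's proof, and its hard case ($e\in T$) is where the hypothesis that monotonic subwords converge is really spent: one looks at the two components $T_1,T_2$ of the tree minus $\mathring{e}$, classifies by type the chords straddling the crossings, extracts a hyperfinite uniform or alternating subword, transfers down to arbitrarily long finite such subwords of the standard $w$, assembles (via a nested construction in the alternating case) an infinite one, and applies Ramsey's theorem to produce a monotonic subword accumulating on both sides of $e$ --- which cannot converge, since every end has a neighborhood missing one component of $T\setminus\mathring{e}$. None of this appears in your argument; your appeal to ``the chords in the halo of $h_0$ all represent the same end'' controls only where the chords go, not where the tree arcs joining them go.

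A secondary issue: you concatenate arcs of $T^*$ directly. Since $T$ is a \emph{topological} spanning tree, those arcs live in $|\Gamma|^*$ and may pass through internal ends, so your $\tilde\alpha$ is not an internal path in the graph $\Gamma^*$ and the edge-by-edge parametrization does not immediately apply to it. The paper avoids this by first building the hyperfinite graph $\Lambda$ with a genuine internal spanning tree $T'$ whose chords are chords of $T^*$, and running the construction there (restricting $w^*$ to the index set $I$ of surviving chords rather than to a hyperfinite set of positions). This part is repairable, but combined with the missing Claim it means the proposal as written does not constitute a proof.
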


\begin{proof}
 We fix $\Lambda$ and $T'$ as above.  As in the proof of the previous proposition, we consider the hyperfinite word $w^*\upharpoonright I$.  As in the case of finite graphs, $w^*\upharpoonright I$ corresponds to an internal loop $\beta$ in $\Lambda$ based at $p$ obtained by concatenating alternatively chords of $T'$ and injective paths in $T'$.
 
 \
 
\noindent \textbf{Claim:}  $\beta$ crosses each edge of $\Gamma$ finitely many times. 

\

\noindent For the moment, suppose that the Claim holds.  As we've done several times, we consider $\rho:\n^*\to \n^*$, where $\rho(\xi)$ is the the number of edges at distance at most $\xi$ from the basepoint $p$ that $\beta$ crosses, counted with multiplicity. The claim implies that $\rho(n)\in \n$ for $n\in \n$. We then reparameterize $\beta$ in such a way that intervals in its domain corresponding to crossing an edge at distance $\xi$ have length $2^{-\xi}\rho(\xi)$ (and such crossings are performed linearly). Consider the path $\alpha$ defined by $\alpha(t)=\st(\beta(t))$ if $\st(\beta(t))$ is defined, while $\alpha(t)$ is the end corresponding to $\beta(t)$ otherwise (we are using Lemma \ref{chi}). Arguments identical to those earlier in the paper show that $\alpha$ is a loop in $|\Gamma|$ based at $p$ and that $w=w_\alpha$, as we desired.

%For later purposes, notice also that $x\in \Gamma^*\backslash\Gamma_{\f}$ then, by definition of $\xi$, $x$ and $\varphi(x)$ determine the same end.
\

\noindent \textbf{Proof of Claim:}  Fix $e\in E(\Gamma)$.  We must show that $\beta$ crosses $e$ finitely many times.  This is easily seen to be the case if $e$ is a chord of $T$.  Indeed, we then have $e$ and $e^{-1}$ appear only finitely many times in $w$, for otherwise, by Ramsey's theorem, there would exist a monotonic subword $w\upharpoonright S'$ of $w$ such that $w(s)\in \{e,e^{-1}\}$ for all $s\in S'$; it is clear that $w\upharpoonright  S'$ does not converge.  Consequently, $e$ and $e^{-1}$ appear in $w^*\upharpoonright I$ only finitely many times, whence $\beta$ crosses $e$ only finitely many times.

We now suppose that $e\in T$ and further suppose, towards a contradiction, that $e$ is crossed infinitely many times by $\beta$.  First observe that $T'\setminus \mathring{e}$ has two components, say $T_1$ and $T_2$.  Given a chord of $T'$, we will say it is of type $(i,j)$, where $i,j\in \{1,2\}$, if its initial point is in $T_i$ and its final point is in $T_j$.  We can then find a hyperfinite sequence $s_0,s_1,\dots,s_\xi$, where $s_i\in S^*$ and $\xi>\n$, so that going in $T'$ from the final point of $w(s_i)$ to the initial point of $w(s_{i+1})$ requires crossing $e$, whence $w(s_i)$ is of type $(*,j)$ and $w(s_{i+1})$ is of type $(i,\dagger)$, where $i\not=j$.  

We claim that there is a infinite, hyperfinite subword of $w(s_0)w(s_1)\cdots w(s_\xi)$ that is either \emph{uniform} in the sense that every $w(s_i)$ has type $(1,2)$ or every $w(s_i)$ has type $(2,1)$, or \emph{alternating} in the sense that, if $w(s_i)$ is of type $(1,1)$ (resp. $(2,2)$), then $w(s_{i+1})$ is of type $(2,2)$ (resp. $(1,1)$).
%if the unique path in $T'$ connecting the final point of $w(s_i)$ to the initial point of $w(s_{i+1})$ crosses $e$ in a particular direction, then the unique path in $T'$ connecting the final point of $w(s_{i+1})$ to the initial point of $w(s_{i+2})$ crosses $e$ in the opposite direction.  
Indeed, either the subword consisting of chords of type $(1,2)$ is infinite or the subword consisting of chords of type $(2,1)$ is infinite, or else the subword obtained by removing all chords of type $(1,2)$ and $(2,1)$ is infinite and hyperfinite.  We may thus suppose that $w(s_0)w(s_1)\cdots w(s_\xi)$ is either uniform or alternating.  However, it now follows that $w(s_0)w(s_1)\cdots w(s_\xi)$ is either uniform or alternating in $T^*$.  (Recall that chords of $T'$ are also chords of $T^*$.)  Indeed, if the unique path in $T^*$ connecting the final point of $w(s_i)$ to the initial point of $w(s_{i+1})$ did not pass through $e$, then after applying $\varphi$, we would see that, since $\varphi$ acts identically on $\hat{\Gamma}$, the unique path in $T'$ connecting the final point of $w(s_i)$ to the initial point of $w(s_{i+1})$ does not pass through $e$.  It remains to note that the unique paths in $T^*$ and $T'$ connecting the final point of $w(s_i)$ to the initial point of $w(s_{i+1})$ agree on $\hat{\Gamma}$.   

%Observe that, since $T'\setminus \mathring{e}$ has two internally connected components, if in traveling in $T'$ from the final point of $w(s_i)$ to $w(s_{i+1})$ we cross $e$ in a particular direction, then we cross $e$ in the opposite direction when traveling in $T'$ from the final point of $w(s_{i+1})$ to the final point of $w(s_{i+2})$; for this reason, we call the hyperfinite subword $w(s_0)\cdots w(s_\xi)$ \emph{alternating} for $e$.  

%We now observe that there is a hyperfinite alternating word for $e$ in $T^*$. In fact, this is clear if the final points of $w(s_i)$ and $w(s_{i+1})$ are in $\Gamma$, as $T^*\cap \Gamma=T'\cap \Gamma$. Otherwise, notice that $w(s_i)$ and $w(s_{i+1})$ cannot correspond to the same end, for otherwise one could easily construct a loop in $T$ consisting of the union of the compactifications of two rays containing $e$, see the construction of $\alpha$ from $\beta$. This settles the case that $w(s_i)$ and $w(s_{i+1})$ are both not in $\Gamma$, the remaining case is similar.
\par
Thus, by transfer, there exist arbitrarily long finite subwords $w\upharpoonright S'$ that are either uniform for $e$ or alternating for $e$. If we have arbitrarily long finite uniform subwords, then we have an infinite uniform subword obtained by taking the union of the finite subwords.  To obtain an infinite alternating subword from arbitrarily long finite alternating subwords is a little more subtle. Let $w_n=w\upharpoonright S_n$ be subwords alternating for $e$ of length $10^{n+1}$. Start with $S'_0=S_0$. Let $I^0_1,\dots,I^0_{j(0)}$ be the natural intervals into which $S$ is subdivided by removing the elements of $S'_0$. Then there exists $j$ so that $I^0_j$ contains at least, say, $4$ elements of $S_1\backslash S'_0$.  We can extend $S'_0$, obtaining $S'_1$, by adding at least $2$ such elements in a way that $w\upharpoonright S'_1$ is again alternating for $e$. Proceeding in this way, we get \emph{nested} arbitrarily long subwords $w\upharpoonright S'_n$ alternating for $e$. 

In either case, after applying Ramsey's theorem, we have an infinite, monotonic subword of $w$ such that there are infinitely many endpoints of chords in $T_1$ and infinitely many endpoints of chords in $T_2$.  Such a subword does not converge to an end as every end has a neighborhood that is disjoint from one of the components of $T\setminus \mathring{e}$ (for otherwise $T$ contains a circle).
%\par
%We can now define $\rho(\xi)$, for $\xi\in\n^*$, to be the number of edges at distance at most $\xi$ from the basepoint $p$ that by $\beta$ crosses, counted with multiplicity. By the previous discussion, $\rho(n)$ is finite when $n$ is finite. We can reparameterize $\beta$ in such a way that intervals in its domain corresponding to crossing an edge at distance $\xi$ have length $2^{-\xi}\rho(\xi)$ (and such crossings are performed linearly).
\end{proof}

\section{Applications}
\label{appl}

\subsection{$\pi_1$-injectivity}
Both of the applications in this subsection rely on applying the transfer principle to the following
\begin{fact}\label{inclusion}
If $\Gamma^1$ is a connected subgraph of the finite, connected graph $\Gamma^2$, then the natural map $\pi_1(\Gamma^1)\to \pi_1(\Gamma^2)$ is injective.  
\end{fact}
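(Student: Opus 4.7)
The plan is to use the classical description of the fundamental group of a finite connected graph as a free group with basis indexed by the chords of a spanning tree, and to arrange spanning trees of $\Gamma^1$ and $\Gamma^2$ coherently so that the induced map on $\pi_1$ becomes a literal inclusion of free groups.

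First I would pick a spanning tree $T^1$ of $\Gamma^1$, which exists since $\Gamma^1$ is connected and finite. I would then extend $T^1$ to a spanning tree $T^2$ of $\Gamma^2$ by a greedy argument: starting from $T^1$, repeatedly adjoin an edge of $\Gamma^2$ joining a vertex already in the tree to one not yet in the tree. Since $\Gamma^2$ is finite and connected, this process terminates with a spanning tree $T^2\supseteq T^1$.

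Next, fix a basepoint $p\in\Gamma^1$ and let $e_1,\ldots,e_n$ enumerate the (oriented) chords of $T^1$ in $\Gamma^1$; these are automatically chords of $T^2$ in $\Gamma^2$, so I enlarge the list to $e_1,\ldots,e_n,e_{n+1},\ldots,e_m$ to enumerate \emph{all} chords of $T^2$. For $k\in\{1,2\}$ and each relevant $i$, let $\gamma^k_i$ denote the loop at $p$ obtained by following the unique $T^k$-path from $p$ to the initial vertex of $e_i$, traversing $e_i$, and returning along the unique $T^k$-path. The standard computation recalled in the introduction yields $\pi_1(\Gamma^1,p)=F([\gamma^1_1],\ldots,[\gamma^1_n])$ and $\pi_1(\Gamma^2,p)=F([\gamma^2_1],\ldots,[\gamma^2_m])$.

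The crux is the observation that, whenever both endpoints of a path lie in $\Gamma^1$, the unique $T^2$-path between them coincides with the unique $T^1$-path: since $T^1\subseteq T^2$ is itself a tree containing those endpoints, uniqueness of paths in the tree $T^2$ forces the two to agree. Consequently, for $1\leq i\leq n$ the loops $\gamma^1_i$ and $\gamma^2_i$ are literally the same loop in $\Gamma^2$, so the map induced by inclusion sends the free basis $\{[\gamma^1_i]\}_{i\leq n}$ bijectively onto the subset $\{[\gamma^2_i]\}_{i\leq n}$ of a free basis of $\pi_1(\Gamma^2,p)$. Thus the induced homomorphism is the canonical embedding $F_n\hookrightarrow F_m$, which is injective. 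I do not anticipate a genuine obstacle; the only subtlety is extending $T^1$ to $T^2$ inside $\Gamma^2$, which is a routine greedy argument using connectedness.
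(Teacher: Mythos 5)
Your proof is correct and follows essentially the same route as the paper, which also extends a spanning tree $T^1$ of $\Gamma^1$ to a spanning tree $T^2$ of $\Gamma^2$ and invokes the combinatorial description of the fundamental groups; you have simply filled in the details (the greedy extension, the coincidence of $T^1$- and $T^2$-paths, and the resulting inclusion of free bases) that the paper leaves implicit.
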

To see this, take a spanning tree $T^1$ of $\Gamma^1$ and extend it to a spanning tree $T^2$ of $\Gamma^2$.  Using these spanning trees in the combinatorial characterization of $\pi_1(\Gamma^1)$ and $\pi_1(\Gamma^2)$ yields the aforementioned result.

\begin{prop}
For any locally finite graph $\Gamma$, the inclusion $\Gamma'\to
|\Gamma|$ is $\pi_1$-injective for every subgraph $\Gamma'$ of $\Gamma$.
\end{prop}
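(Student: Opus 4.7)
The plan is to combine the hyperfinite setup of Section \ref{emb} with the transfer of Fact \ref{inclusion}, applied to a finite subgraph capturing the given loop. After restricting to the component of $\Gamma'$ containing the basepoint $p$, I may assume $\Gamma'$ is connected. Let $\alpha$ be a loop in $\Gamma'$ at $p$ which is nullhomotopic in $|\Gamma|$, so that $[\alpha]=1$ in $\pi_1(|\Gamma|)$; the goal is to deduce that $\alpha$ is nullhomotopic in $\Gamma'$.

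Since the image of $\alpha$ is compact in the locally finite $\Gamma$, I first pick a finite connected subgraph $H\subseteq\Gamma'$ containing $\alpha$. Then I fix $\nu\in\n^*\setminus\n$ large enough that $H\subseteq B_\Gamma(p,\nu)$ and work with $\Gamma_{\hyp}$ in its standard model, equipped with the collapsing map $\theta:|\Gamma|^*\to\Gamma_{\hyp}$. Because $\theta$ restricts to the identity on $B_\Gamma(p,\nu)$, the subgraph $H$ sits inside $\Gamma_{\hyp}$ as a standard finite connected subgraph and $\theta\circ\alpha=\alpha$ pointwise.

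The key step is that $\Theta=\theta_*$ is a group homomorphism, so $[\alpha]=1$ in $\pi_1(|\Gamma|)$ immediately gives $\Theta([\alpha])=1$ in $\pi_1(\Gamma_{\hyp})$. Applying the transfer of Fact \ref{inclusion} to $H\subseteq\Gamma_{\hyp}$ yields an injection $\pi_1(H)\hookrightarrow\pi_1(\Gamma_{\hyp})$. Since the class of $\alpha$ in $\pi_1(H)$ maps precisely to $\Theta([\alpha])=1$ under this injection (using $\theta\circ\alpha=\alpha$), the class of $\alpha$ in $\pi_1(H)$ is itself trivial. Hence $\alpha$ is nullhomotopic in $H\subseteq\Gamma'$, as required.

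I do not expect any real obstacle: all one needs beyond the definition of $\Theta$ and the transfer of a classical finite-graph fact is the elementary observation that a standard finite subgraph of $B_\Gamma(p,\nu)$ sits as an internal subgraph of the standard model of $\Gamma_{\hyp}$, which is immediate from the construction.
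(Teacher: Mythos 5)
Your proposal is correct and is essentially the paper's own argument, just run in the contrapositive direction: the paper starts from a loop non-trivial in $\Gamma'$, localizes it to a finite connected subgraph $\Delta$, and uses the transferred Fact \ref{inclusion} for $\Delta^*\hookrightarrow\Gamma_{\hyp}$ together with $\theta(\alpha)=\alpha$. The only cosmetic difference is that you spell out the reduction to a connected $\Gamma'$ and the identification of $H$ inside $B(p,\nu)$, which the paper leaves implicit.
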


\begin{proof}
Let $\alpha$ be a non-trivial loop in $\Gamma'$. Then $\alpha$ is
non-trivial in a finite connected subgraph $\Delta$ of $\Gamma$. Hence
$\theta(\alpha)=\alpha$ is non-trivial in $\Delta^*$ and we can conclude
because the inclusion $\Delta^*\to \Gamma_{\hyp}$ is $\pi_1$-injective by Fact \ref{inclusion}.
\end{proof}

\begin{prop}[\cite{diestel}, Corollary 16]
Suppose that $\Gamma^1$ is a connected subgraph of the locally finite, connected graph $\Gamma^2$. Then the inclusion
$\iota:\Gamma^1\to\Gamma^2$ induces an injective map
$\iota_*:\pi_1(|\Gamma^1|)\to \pi_1(|\Gamma^2|)$.
\end{prop}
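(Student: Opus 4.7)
The plan is to imitate the previous proposition at the level of hyperfinite graphs: I will build $\Gamma^1_{\hyp}$ and $\Gamma^2_{\hyp}$ so that $\Gamma^1_{\hyp}$ sits inside $\Gamma^2_{\hyp}$ as an internally connected internal subgraph. Transfer of Fact \ref{inclusion} will then supply an injection $\pi_1(\Gamma^1_{\hyp})\hookrightarrow \pi_1(\Gamma^2_{\hyp})$, and combining this with the main theorem applied to both graphs will force $\iota_*$ to be injective.

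Concretely, fix a basepoint $p\in \Gamma^1$ and $\nu\in \n^*\setminus\n$. Let $\A_1$ consist of the internally connected components of $(\Gamma^1)^*\setminus \mathring{B}_{\Gamma^1}(p,\nu)$; this is the good collection for $\Gamma^1$ defining the standard model $\Gamma^1_{\hyp}$. Define $\A_2:=\A_1\cup\A_2^{\mathrm{new}}$, where $\A_2^{\mathrm{new}}$ consists of the internally connected components of $(\Gamma^2)^*\setminus\bigl[\mathring{B}_{\Gamma^2}(p,\nu)\cup(\Gamma^1)^*\bigr]$. Members of $\A_1$ sit in $(\Gamma^1)^*$ while members of $\A_2^{\mathrm{new}}$ avoid $(\Gamma^1)^*$, so the union is a disjoint family of internally connected subgraphs. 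Local finiteness of each $\Gamma^i$ places every standard vertex of $\Gamma^i$ inside $\mathring{B}_{\Gamma^i}(p,\nu)$, so every member of $\A_2$ is disjoint from $\Gamma^2=(\Gamma^2)_{\f}$; and the uncovered vertices of $(\Gamma^2)^*$ form the hyperfinite set $\mathring{B}_{\Gamma^1}(p,\nu)\cup\bigl(\mathring{B}_{\Gamma^2}(p,\nu)\setminus(\Gamma^1)^*\bigr)$. Hence $\A_2$ is a good collection for $\Gamma^2$.

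Let $\theta_i:|\Gamma^i|^*\to \Gamma^i_{\hyp}$ denote the resulting collapsing maps, extended to the end compactifications via Lemma \ref{ext}. Because $\A_1\subseteq\A_2$, every vertex and edge of $\Gamma^1_{\hyp}$ is also a vertex or edge of $\Gamma^2_{\hyp}$, giving an inclusion $j:\Gamma^1_{\hyp}\hookrightarrow\Gamma^2_{\hyp}$ of internally connected hyperfinite graphs. Transfer of Fact \ref{inclusion} then yields that $j_*:\pi_1(\Gamma^1_{\hyp})\to\pi_1(\Gamma^2_{\hyp})$ is injective. One next checks that the square
\[
\begin{array}{ccc}
\pi_1(|\Gamma^1|) & \xrightarrow{\iota_*} & \pi_1(|\Gamma^2|) \\
\Theta_1\downarrow & & \downarrow\Theta_2 \\
\pi_1(\Gamma^1_{\hyp}) & \xrightarrow{j_*} & \pi_1(\Gamma^2_{\hyp})
\end{array}
\]
commutes: on $(\Gamma^1)^*$ both $\theta_1$ and $\theta_2\circ\iota^*$ fix $\mathring{B}_{\Gamma^1}(p,\nu)$ pointwise and collapse each $\Delta\in\A_1$ to the same new vertex, and on ends of $|\Gamma^1|^*$ the nonstandard representatives lie in some $\Delta\in\A_1$, so the two extensions agree there as well. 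Since $\Theta_1$ is injective by the main theorem and $j_*$ is injective, the composite $j_*\circ\Theta_1=\Theta_2\circ\iota_*$ is injective, forcing $\iota_*$ to be injective.

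The main technical point is to ensure that the two good collections are genuinely compatible. One has to check that the members of $\A_1$ — a priori only required to be disjoint from $(\Gamma^1)_{\f}$ — are automatically disjoint from the potentially larger set $(\Gamma^2)_{\f}$, and that the supplementary pieces $\A_2^{\mathrm{new}}$ fill in the rest of $(\Gamma^2)^*$ outside a hyperfinite set without overlapping $\A_1$. Both points reduce to the observation that for a locally finite graph $\Gamma^i$ the ball $B_{\Gamma^i}(p,n)$ is finite for each standard $n$, so $(\Gamma^i)_{\f}=\Gamma^i$ and every standard vertex is absorbed into $\mathring{B}_{\Gamma^i}(p,\nu)$ for infinite $\nu$.
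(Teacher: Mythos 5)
There is a genuine gap, and it sits exactly at the point you flag as ``the main technical point.'' The collection $\mathcal{A}_2=\mathcal{A}_1\cup\mathcal{A}_2^{\mathrm{new}}$ is assembled from connected components of \emph{two different} graphs, so nothing prevents an edge of $(\Gamma^2)^*$ from joining a member of $\mathcal{A}_1$ to a member of $\mathcal{A}_2^{\mathrm{new}}$; such an edge is interior to neither member and therefore survives the collapse. Take $\Gamma^2$ to be the one-way infinite ladder and $\Gamma^1$ its bottom rail: then $\mathcal{A}_1$ is the tail $\{v_k:k\geq\nu\}$ of the bottom rail, $\mathcal{A}_2^{\mathrm{new}}$ is the tail of the top rail, and \emph{every} rung beyond $\nu$ becomes a separate edge of $\Gamma^2_{\hyp}$ joining the two collapsed vertices. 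Two things break. First, these rungs are indexed by $\{k\in\n^*:k\geq\nu\}$, which is not hyperfinite, so $\Gamma^2_{\hyp}$ is not a hyperfinite graph and the transferred Fact \ref{inclusion} does not apply to the inclusion $j$. Second, and more fundamentally, the unique end of the ladder is a limit of vertices lying in \emph{both} members of $\mathcal{A}_2$ at every internal scale, so the collapsing map $\theta_2$ has no internally continuous extension to the ends of $|\Gamma^2|^*$: a loop that runs out along the bottom rail and back along the top has no well-defined image (which rung does it cross at infinity?). Hence $\Theta_2$ is not a well-defined homomorphism on $\pi_1(|\Gamma^2|)$, the commuting square does not exist, and the implication $\iota_*[\alpha]=1\Rightarrow j_*\Theta_1[\alpha]=1$ — the crux of your argument — has no justification. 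Note that being a ``good collection'' in the paper's sense is not sufficient here: that definition only controls the uncovered \emph{vertices}, and it implicitly relies (as in the paper's own applications) on the members being components of a single induced subgraph, so that no edges run between distinct members.

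The paper resolves this by modifying $\Gamma^1_{\hyp}$ rather than $\Gamma^2_{\hyp}$: it keeps the standard model for $\Gamma^2_{\hyp}$ and builds $\Gamma^1_{\hyp}$ from the components of $(\Gamma^1)^*\cap\bigl((\Gamma^2)^*\setminus\mathring{B}_{\Gamma^2}(p,\nu)\bigr)$, i.e.\ it cuts $\Gamma^1$ along the $\Gamma^2$-ball. The price is that the induced map $\iota_{\hyp}:\Gamma^1_{\hyp}\to\Gamma^2_{\hyp}$ is no longer injective — several components of $\Gamma^1$-material can lie in one component of $(\Gamma^2)^*\setminus\mathring{B}_{\Gamma^2}(p,\nu)$ — but it factors as a quotient identifying finitely many pairs of vertices (a wedge-type operation, hence $\pi_1$-injective) followed by a genuine subgraph inclusion, to which Fact \ref{inclusion} applies. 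If you want to keep your ``make it a literal subgraph'' strategy, you would at minimum have to enlarge the members of $\mathcal{A}_2$ so that they absorb all edges leaving $\mathcal{A}_1$-members toward infinity, at which point you are essentially forced back to components of $(\Gamma^2)^*\setminus\mathring{B}_{\Gamma^2}(p,\nu)$ and lose injectivity of $j$ on vertices — which is precisely the difficulty the paper's quotient step is designed to handle.
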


\begin{proof}
 Let $\{\Delta_{i}\}$
be the collection of all internally connected components of
$(\Gamma^1)^*\cap\left((\Gamma^2)^*\backslash\mathring{B}_{\Gamma^2}(p,\nu)\right)$.
Then $\{\Delta_i\}$ is a good collection of subgraphs of $\Gamma_1^*$, whence we can construct
$\Gamma^1_{\hyp}$ using this collection.  We use the standard model for $\Gamma^2_{\hyp}$.  With the given models for
$\Gamma^1_{\hyp},\Gamma^2_{\hyp}$, we have a natural map $\iota_{\hyp}:
\Gamma^1_{\hyp}\to\Gamma^2_{\hyp}$ induced by $\iota$, namely $\iota_{hyp}(\theta_1(x))=\theta_2(\iota(x))$, where $\theta_i:(\Gamma^i)^*\to
\Gamma^i_{\hyp}$ are the maps collapsing the relevant subgraphs. This map
satisfies $(\iota_{\hyp})_*\circ\Theta_1=\Theta_2\circ\iota_*$ at the level of the
fundamental groups, as can easily be checked. It suffices to show
that $\iota_{\hyp}$ is $\pi_1$-injective. Indeed, consider the graph
$\Lambda$ obtained from $\Gamma^1_{\hyp}$ by identifying
$x,y\in\Gamma^1_{\hyp}$ if $\iota_{\hyp}(x)=\iota_{\hyp}(y)$. Notice that if
$x\neq y$ and $\iota_{\hyp}(x)=\iota_{\hyp}(y)$, then $x$ and $y$ correspond
to subgraphs $\Delta_{i_x},\Delta_{i_y}$ contained in the same connected
component of $(\Gamma^2)^*\backslash\mathring{B}_{\Gamma^2}(p,\nu)$. In
particular, $\Lambda$ is obtained from $\Gamma^1_{\hyp}$ identifying
pairs of vertices, so that the natural map $\Gamma^1_{\hyp}\to \Lambda$
is $\pi_1$-injective. Also, $\Lambda$ is a subgraph of
$\Gamma^2_{\hyp}$, so that the conclusion follows from Fact \ref{inclusion}.
\end{proof}

We should remark that the proof of the preceding result appearing in \cite{diestel} uses the nontrivial fact that every closed, connected subspace of $|\Gamma|$ is arc-connected; our proof avoids using this nontrivial result and is essentially an application of the transfer principle and our construction.

Notice that the proposition implies, as in \cite[Corollary 18]{diestel}, that $\pi_1(|\Gamma|)$ is free is and only if every end has a contractible neighborhood (such ends are called \emph{trivial}). Namely, if all ends are trivial, then $|\Gamma|$ is homotopy equivalent to a finite graph, whereas if $|\Gamma|$ contains a non-trivial end, then it contains a subgraph $\Gamma'$ with exactly one non-trivial end. The compactification of $\Gamma'$ is homotopy equivalent to the Hawaiian Earring and hence its fundamental group, which is contained in $\pi_1(|\Gamma|)$, is not free \cite{caco,hig}.

\subsection{Homology}
Recall that the first homology group (with coefficients in $\z$) $H_1(G)$ of the group $G$ is its abelianization, i.e. the quotient of $G$ by the subgroup $[G,G]$ generated by commutators of elements of $G$. The \emph{commutator length} of $g\in G$ is defined to be infinite if $g\notin [G,G]$, while it is the minimal integer $n$ so that $g$ can be written as the product of $n$ commutators if $g\in[G,G]$.
\par
As usual, let $\Gamma$ be a connected locally finite graph, and let $\theta:|\Gamma|^*\to \Gamma_{\hyp}$ be the map we defined in Section \ref{emb}.

\begin{lemma}\label{homol}
 If the loop $\alpha$ is null-homologous, i.e. it represents $0\in H_1(\pi_1(|\Gamma|))$, then $\theta(\alpha)$ has finite commutator length as an element of $\pi_1(\Gamma_{\hyp})$.
\end{lemma}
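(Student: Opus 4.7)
The plan is to observe that this is a direct consequence of the functoriality of abelianization, together with the fact that $\theta$ induces a group homomorphism $\Theta$ as described earlier in the paper.

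First I would unpack the hypothesis: $\alpha$ being null-homologous means that the class $[\alpha] \in \pi_1(|\Gamma|)$ lies in the commutator subgroup $[\pi_1(|\Gamma|),\pi_1(|\Gamma|)]$. By the very definition of the commutator subgroup, this means there exist a \emph{standard} finite integer $n$ and standard elements $g_1,h_1,\ldots,g_n,h_n \in \pi_1(|\Gamma|)$ such that
$$[\alpha] = \prod_{i=1}^n [g_i,h_i].$$
The crucial point is that $n$ is an honest natural number, not just a hyperfinite one, because $[\alpha]$ is a standard element and the commutator subgroup in the standard group $\pi_1(|\Gamma|)$ is generated by standard commutators using finite products.

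Next I would apply the group homomorphism $\Theta:\pi_1(|\Gamma|^*)\to \pi_1(\Gamma_{\hyp})$ induced by $\theta$. Since $\Theta$ is a homomorphism, it preserves commutators and finite products, so
$$\Theta([\alpha]) = \prod_{i=1}^n [\Theta(g_i),\Theta(h_i)].$$
By construction of $\Theta$, the element $\Theta([\alpha]) \in \pi_1(\Gamma_{\hyp})$ is precisely the class of $\theta(\alpha)$. We have therefore exhibited $\theta(\alpha)$ as a product of exactly $n$ commutators in $\pi_1(\Gamma_{\hyp})$, so its commutator length is at most $n$, which is finite.

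The only thing to be careful about is the distinction between standard-finite and hyperfinite commutator length: a priori, elements of the internally free group $\pi_1(\Gamma_{\hyp})$ could be expressible only as hyperfinite products of commutators, but the argument above produces a genuinely finite such expression because the bound $n$ is inherited from a standard decomposition in $\pi_1(|\Gamma|)$. There is no real obstacle here; the lemma is essentially a statement that commutator length cannot increase under a group homomorphism, combined with the fact that $\Theta$ restricts to a homomorphism on the standard subgroup $\pi_1(|\Gamma|) \subseteq \pi_1(|\Gamma|^*)$.
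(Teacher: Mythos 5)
Your proof is correct and is essentially identical to the paper's: both write $[\alpha]$ as a finite product of $n$ commutators in $\pi_1(|\Gamma|)$ and push this through the homomorphism $\Theta$ to bound the commutator length of $\theta(\alpha)$ by $n$. The extra remark distinguishing standard-finite from hyperfinite products is a reasonable clarification but does not change the argument.
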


\begin{proof}
 Let $g\in \pi_1(|\Gamma|)$ be the element represented by $\alpha$ and let $f:G\to H_1(G)$ be the natural map. If $f(g)=0$, then we can write $g$ as the product of, say, $n$ commutators. As $\Theta:\pi_1(|\Gamma|)\to \pi_1(\Gamma_{\hyp})$ is a group homomorphism, we have that $\Theta(g)$ can be written as a product of $n$ commutators as well.
\end{proof}

The lemma can be used to find ``unexpected'' non-null-homologous loops, as the commutator length of elements of a free group can be computed, see \cite{GT-commlength} and below. For example, it is shown in \cite[Section 6]{DiSp-hom} that, among others, the following loop $\alpha$ in the infinite ladder, see Figure \ref{notnullhom} below, is not null-homologous (this loop is the same as the loop $\rho$ as in \cite[Figure 5.3]{DiSp-topoappr3}). The reason why this is interesting, as explained in the introduction, is that the fact that this loop is not null-homologous implies that the usual singular homology for $|\Gamma|$ is different from the topological cycle space of $|\Gamma|$.

\begin{figure}[h]
 \includegraphics[scale=0.9]{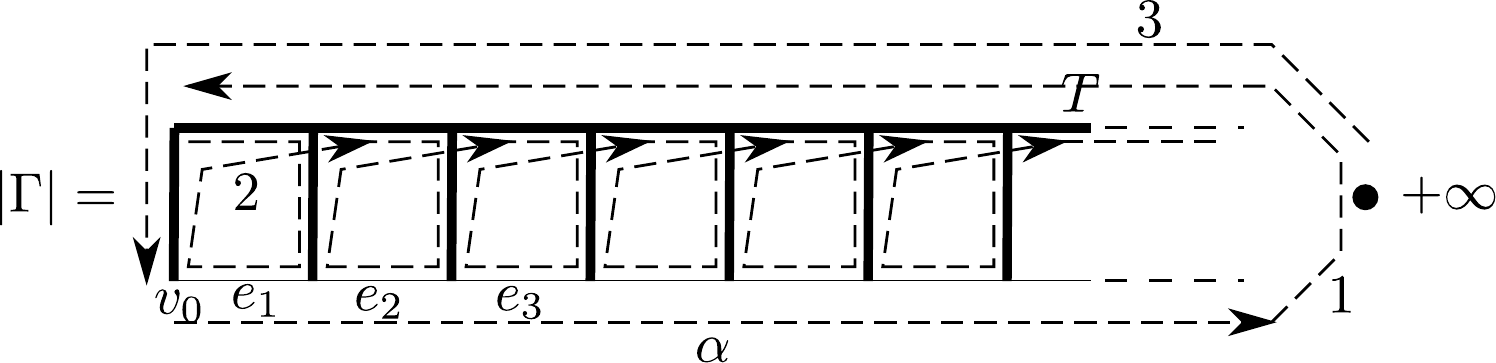}
\caption{The three parts of $\alpha$}
\label{notnullhom}
\end{figure}
$\alpha$ is based at $v_0$, travels through $e_1,e_2\dots$ until it reaches the end, comes back at the top traveling in $T$, then loops around the first square in the clockwise direction, moves in $T$, loops around the second square and so on until it reaches the end. Then, it comes back at the top of the ladder staying in $T$. If the ladder was finite, an analogous loop would be null-homologous as, for each $i$, the sum of the exponents of the occurrences of $e_i$ is $0$.

Let us show that $\alpha$ is not null-homologous using Lemma \ref{homol}.  First, let us recall how commutators length can be computed \cite{GT-commlength}. Let $w$ be a word on the letters $x_1^{\pm 1},\dots ,x_n^{\pm 1}$ representing an element in the commutator subgroup of the free group with free basis $x_1,\ldots,x_n$. The sum of the exponents of each letter is $0$, so that we can choose a \emph{pairing} $\mathcal{P}$ of occurrences of letters with opposite exponent. The \emph{circle graph} associated to such a pairing is a circle $C$ with vertices for each letter of $w$ and appearing in the same cyclic order, together with edges $\{e_i\}$ joining paired letters.
\begin{figure}[h]
 \includegraphics[scale=0.8]{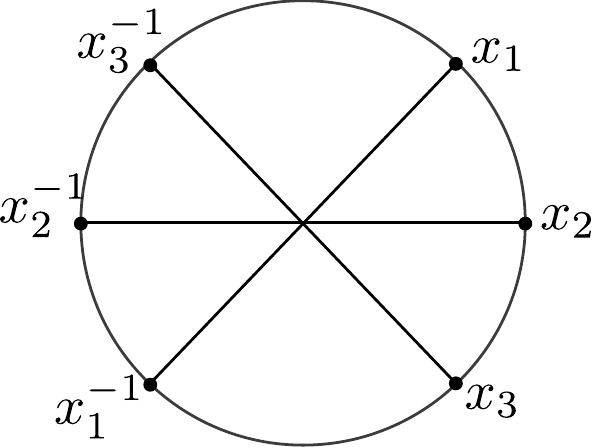}
\caption{Circle graph associated to the unique pairing for $w=x_1x_2x_3x_1^{-1}x_2^{-1}x_3^{-1}$}
\end{figure}

We can associate to such a graph a matrix $M_\mathcal{P}$ whose $ij-$th entry is $1$ if $e_i$ and $e_j$ are linked, meaning that the endpoints of $e_i$ lie in different connected components of $C\backslash\partial e_j$, and is $0$ otherwise (diagonal elements are $0$). It turns out that the commutator length of $w$ is the minimum, over all pairings $\mathcal{P}$, of $\rank(M_\mathcal{P})/2$, where $M_\mathcal{P}$ is regarded as a matrix with coefficients in $\mathbb{Z}/2\mathbb{Z}$.
\par
 If we choose the spanning tree for $\Gamma_{\hyp}$ as in Figure \ref{sptree}, then $\theta(\alpha)$ represents the word $e_1\dots e_\nu e_1^{-1}\dots e_\nu^{-1}$ (notice in particular that $\theta(\alpha)$ is internally null-homologous, as for each $i$ the sum of the exponents of the occurrences of $e_i$ is $0$). It follows from the discussion above, in view of the fact that there is only one possible pairing, that the commutator length of such a word coincides with $\rank(M_\nu)/2$, where $M_\nu$ is the $\nu\times\nu$ matrix whose diagonal elements are $0$ and whose non-diagonal elements are $1$.
It is easy to show inductively that, for $n\in \n$, the matrix $M_n$ defined analogously to $M_\nu$, regarded as a matrix with coefficients in $\mathbb{Z}$, has determinant $(-1)^{n-1}\cdot (n-1)$ (subtracting from the first column of $M_n$ the sum of the other columns divided by $n-2$ yields a matrix where the only non-zero element in the first column is the top entry and is $-(n-1)/(n-2)$).
%$$M_3 =
%\left( {\begin{array}{ccc}
% 0 & 1 & 1 \\
% 1 & 0 & 1 \\
% 1 & 1 & 0
% \end{array} } \right)$$
Consequently, if $n$ is even, then the rank of $M_n$ is $n$.  If $n$ is odd, then since $M_{n-1}$ is a minor of $M_n$, we have that the rank of $M_n$ is $n-1$.  Thus, the commutator length of $e_1\cdots e_\nu e_1^{-1}\cdots e_\nu^{-1}$ is $\rank(M_\nu)/2=\lfloor\nu/2\rfloor>\n$ and, by Lemma \ref{homol}, $\alpha$ is not null-homologous.
\newpage
\begin{nrmks}

\

\begin{enumerate}
\item There are two versions of the topological cycle space, one with coefficients in $\z/2\z$, the other one with coefficients in $\z$. According to \cite{diestel2}\cite[Theorem 5]{DiSp-hom}, a loop $\alpha$ gives rise to the trivial element of the topological cycle space if and only if it traverses every edge an even number of times in the first case, or it traverses every edge the same number of times in both directions in the second case. It is readily seen that this happens if and only if $\theta(\alpha)$ is trivial in $H_1(\Gamma_{\hyp},\z/2\z)$ in the first case, and in $H_1(\Gamma_{\hyp},\z)$ in the second case.
\item Observe that in the above argument, we did not use that $\Theta$ was injective.  This fact is compatible with the standard proof that $\alpha$ is not nullhomologous appearing in \cite{DiSp-hom} as the injectivity of the corresponding map $\pi_1(|\Gamma|)\to F_\infty$ was not used either.
\item It is interesting to note that for any loop $\alpha:[0,1]\to |\Gamma|$, if $\Theta(\alpha)$ is internally nullhomotopic, then $\alpha$ is nullhomotopic, while the corresponding statement for homology is not true, as witnessed by the above discussion.
\item In \cite{diestelsurvey2}, a proof is given that the finite version of the loop $\alpha$ discussed above (called $\rho$ there) is nullhomologous.  Their proof for the finite version is more topological than our algebraic proof and they remark ``But we cannot imitate this proof for $\rho$ and our infinite ladder $L$, because homology classes in $H_1(|G|)$ are still finite chains:  we cannot add infinitely many boundaries to subdivide $\rho$ infinitely often.''  In our opinion, this kind of quote is exactly the kind of thought process that makes nonstandard methods so powerful. 
\end{enumerate}
\end{nrmks}

\bibliographystyle{alpha}
\bibliography{bibl}

% \begin{thebibliography}{1}
% \bibitem{caco} J.W. Cannon and G.R. Conner, \textit{The combinatorial structure of the {H}awaiian {E}arring group}, Topology Appl. \textbf{106} (2000), 225-271.
% \bibitem{CG}C. Champetier and V. Guirardel, \textit{Limit groups as limits of free groups}, Israel J. Math. \textbf{146} (2005), 1-75. 
% \bibitem{D} M. Davis, \textit{Applied Nonstandard Analysis}, John Wiley and Sons Inc., 1977.
% \bibitem{diestel2} R. Diestel, \emph{Graph theory. Fourth Edition}, Graduate Texts in Mathematics \textbf{173}, Springer, Heidelberg 2010.
% \bibitem{diestel3}  R. Diestel, \emph{Locally finite graphs with ends:  a topological approach, I. Basic theory}, Discrete Math. \textbf{311} (2011), 1423-1447.
% \bibitem{diestel} R. Diestel and P. Spr\"ussel, \emph{The fundamental group of a locally finite graph with ends}, Adv. Math. \textbf{226} (2011), 2643-2675.
% \bibitem{gold} I. Goldbring, \emph{Ends of groups:  a nonstandard perspective}, J. Log. Anal. \textbf{3} (2011), Paper 7, 1-28.
% \bibitem{He} C.W. Henson, \textit{Foundations of Nonstandard Analysis: A Gentle Introduction to Nonstandard Extensions}, Nonstandard Analysis: Theory and Applications, L. O. Arkeryd, N. J. Cutland, and C. W. Henson, eds., NATO Science Series C:, Springer, 2001.
% \bibitem{hig} G.~Higman, \textit{Unrestricted free products and varieties of topological groups}, J.\ London Math.\ Soc. \textbf{27} (1952), 73--81.
% \end{thebibliography}
\end{document}